\newtheorem{theorem}{Theorem}[section]
\newtheorem{proposition}[theorem]{Proposition}
\newtheorem{lemma}[theorem]{Lemma}
\newtheorem{corollary}[theorem]{Corollary}
\theoremstyle{definition}
\newtheorem{remark}[theorem]{Remark}
\newcommand{\inter}{\operatorname{int}}
\newcommand{\klim}{\operatorname{K-}\lim}
\newcommand{\norm}[1]{\left\lVert #1 \right\rVert}
\newcommand{\floor}[1]{\left\lfloor #1 \right\rfloor}
\newcommand{\bra}[1]{\left( #1 \right)}
\renewcommand{\tilde}{\widetilde}
\newcommand{\abs}[1]{\left|#1\right|}
\newcommand{\set}[2]{\left\{ #1 \ \middle| \ #2 \right\} }
\newcommand{\ceil}[1]{\left\lceil #1 \right\rceil}
\newcommand{\e}{\varepsilon}
\newcommand{\NN}{\mathbb{N}}
\newcommand{\ZZ}{\mathbb{Z}}
\newcommand{\RR}{\mathbb{R}}
\newcommand{\UU}{\mathbb{U}}
\newcommand{\cN}{\mathcal{N}}
\newcommand{\cB}{\mathscr{B}}
\newcommand{\Ball}[3]{\cB_{#1}\bra{#2,#3} }
\newcommand{\bBall}[3]{\bar\cB_{#1}\bra{#2,#3} }
\newcommand{\BallR}[3]{\cB_\infty^{#1}\bra{#2,#3} }
\newcommand{\bBallR}[3]{\bar\cB_\infty^{#1}\bra{#2,#3} }
\newcommand{\QM}[2]{\mathcal{QM}_{#1}\ifthenelse{\equal{#2}{}}{}{^{\leq #2}}}
\newcommand{\QA}[2]{\mathcal{QA}_{#1}\ifthenelse{\equal{#2}{}}{}{^{\leq #2}}}
\newcommand{\SM}[2]{\mathcal{SM}_{#1}\ifthenelse{\equal{#2}{}}{}{^{\leq #2}}}
\newcommand{\SA}[2]{\mathcal{SA}_{#1}\ifthenelse{\equal{#2}{}}{}{^{\leq #2}}}
\newcommand{\QMD}[2]{{\mathcal{QM}}_{#1}\ifthenelse{\equal{#2}{}}{}{^{\leq #2}}(\UU)}
\newcommand{\SMD}[2]{{\mathcal{SM}}_{#1}\ifthenelse{\equal{#2}{}}{}{^{\leq #2}}(\UU)}
\renewcommand{\subset}{\subseteq}
\renewcommand{\supset}{\supseteq}
\newcommand*\patchAmsMathEnvironmentForLineno[1]{\expandafter\let\csname old#1\expandafter\endcsname\csname #1\endcsname
  \expandafter\let\csname oldend#1\expandafter\endcsname\csname end#1\endcsname
  \renewenvironment{#1}{\linenomath\csname old#1\endcsname}{\csname oldend#1\endcsname\endlinenomath}}\newcommand*\patchBothAmsMathEnvironmentsForLineno[1]{\patchAmsMathEnvironmentForLineno{#1}\patchAmsMathEnvironmentForLineno{#1*}}\AtBeginDocument{\patchBothAmsMathEnvironmentsForLineno{equation}\patchBothAmsMathEnvironmentsForLineno{align}\patchBothAmsMathEnvironmentsForLineno{flalign}\patchBothAmsMathEnvironmentsForLineno{alignat}\patchBothAmsMathEnvironmentsForLineno{gather}\patchBothAmsMathEnvironmentsForLineno{multline}}
\begin{document}

\author[J.\ Konieczny]{Jakub Konieczny}
\address[J.\ Konieczny]{Camille Jordan Institute, 
Claude Bernard University Lyon 1,
43 Boulevard du 11 novembre 1918,
69622 Villeurbanne Cedex, France}
\email{jakub.konieczny@gmail.com}

\title{Characterisation of Meyer sets via the Freiman--Ruzsa theorem}

\begin{abstract}	
	We show that the  Freiman--Ruzsa theorem, characterising finite sets with bounded doubling, leads to an alternative proof of a characterisation of Meyer sets, that is, relatively dense subsets of Euclidean spaces whose difference sets are uniformly discrete.  
\end{abstract}

\keywords{}
\subjclass[2010]{}

\maketitle

\newcommand{\ee}{d}
\newcommand{\dd}{e}	

\section{Introduction}\label{sec:Intro}

The discovery of \emph{quasicrystals} \cite{Shechtman-1984} (see also \cite{LevineSteinhardt-1984,KramerNeri-1984}) sparked considerable  interest in non-periodic, discrete subsets of $\RR^{\ee}$ which exhibit approximate symmetry. For extensive background, we refer to the monograph \cite{BaakeGrimm-book-1,BaakeGrimm-book-2}; accessible introduction can also be found e.g.{} in \cite{Lagarias-1999}, \cite{Lagarias-2000}, \cite{Baake-2002}. More recently, non-abelian analogues have been investigated in \cite{BjorklundHartnickPogorzelski-2018, BjorklundHartnickPogorzelski-2021, BjorklundHartnickPogorzelski-2022}.

 Meyer sets, originally introduced by Meyer  \cite{Meyer-book-1970, Meyer-book-1972} and given their name by Moody \cite{Moody-1995}, are one of the fundamental concepts in the mathematical theory of quasicrystals. They are defined as sets satisfying any of the equivalent conditions in the following theorem. 

\begin{theorem}\label{thm:char_of_Meyer}
	Let $\ee \geq 1$ and let $A \subset \RR^{\ee}$ be a relatively dense set. Then the following conditions are equivalent:
	\begin{enumerate}
	\item\label{it:01:A} $D^+(A-A) < \infty$;
	\item\label{it:01:B} $A - A$ is uniformly discrete;
	\item\label{it:01:C} $A$ is uniformly discrete and $A - A \subset A + F$ for a finite set $F$;
	\item\label{it:01:D} $A \subset M+F$ for a cut-and-project set $M$ and a finite set $F$.
	\end{enumerate}
\end{theorem}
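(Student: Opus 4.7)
The plan is to establish the cycle (iv)$\,\Rightarrow\,$(iii)$\,\Rightarrow\,$(ii)$\,\Rightarrow\,$(i)$\,\Rightarrow\,$(iv), with the last arrow being the substantive content that invokes the Freiman--Ruzsa theorem and the other three being essentially elementary. The implication (ii)$\,\Rightarrow\,$(i) is immediate since a uniformly discrete subset of $\RR^{\ee}$ automatically has bounded local density. For (iii)$\,\Rightarrow\,$(ii), starting from $A-A\subset A+F$ one iterates once to obtain $(A-A)-(A-A)\subset A+F'$ with $F'=F+F-F$ still finite; if $x,y\in A-A$ satisfy $|x-y|<1$, then $x-y$ lies in the intersection of $A+F'$ with a fixed bounded region, a finite set since $A$ is uniformly discrete, so its norm is either $0$ or bounded below by a positive constant depending only on $A$ and $F$. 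For (iv)$\,\Rightarrow\,$(iii), one uses that a cut-and-project set $M$ is itself uniformly discrete and satisfies the Meyer relation $M-M\subset M+F_0$; these properties transfer to $A\subset M+F$ after bookkeeping, absorbing $M$ into $A$ modulo a finite set (using relative density of $A$ where necessary).

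The main implication is (i)$\,\Rightarrow\,$(iv), which is where the Freiman--Ruzsa theorem enters. Fix a large $R>0$ and set $A_R = A\cap B(0,R)$. Relative density of $A$ gives $|A_R|\gtrsim R^{\ee}$, while finiteness of $D^+(A-A)$ gives $|A_R-A_R|\leq|(A-A)\cap B(0,2R)|\lesssim R^{\ee}$. Hence $A_R$ has bounded doubling $|A_R-A_R|\leq K|A_R|$ with $K$ independent of $R$. Applying the Freiman--Ruzsa theorem (in a form adapted to finite subsets of $\RR^{\ee}$, e.g.\ by discretising onto a sufficiently fine sublattice and invoking the classical statement, then lifting back) yields an inclusion $A_R\subset P_R+G_R$, where $P_R$ is a generalised arithmetic progression of rank bounded by some $r(K)$ and $G_R$ is a finite set of cardinality bounded by some $c(K)$, both independent of $R$.

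The hard part will be to assemble the family $\{P_R+G_R\}_R$ of finite approximants into a single cut-and-project set $M$ and single finite error $F$ with $A\subset M+F$. Since both the rank of $P_R$ and the cardinality of $G_R$ are uniformly bounded in $R$, a compactness and diagonal extraction should let one select generators of $P_R$ that stabilise as $R\to\infty$, producing in the limit a lattice $\Gamma\subset\RR^{\ee}\times\RR^{k}$ for some bounded $k$, whose cut-and-project image $M$ contains $A$ up to the finite correction $F$. The delicate conceptual point is to identify the output of Freiman--Ruzsa, which is a finite GAP, with the projection of a lattice through a bounded window, and to coherently coalesce the translates $G_R$ into a single finite $F$; this translation between the approximate-group language and the cut-and-project language is the conceptual core of the argument and, I expect, will be where most of the technical work lies.
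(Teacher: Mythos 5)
Your overall strategy for the substantive implication (\ref{it:01:A}) $\Rightarrow$ (\ref{it:01:D}) --- truncating to $A_R = A \cap B(0,R)$, observing bounded doubling, applying Freiman--Ruzsa to get $A_R \subset P_R + G_R$ with the rank of $P_R$ and the cardinality of $G_R$ bounded independently of $R$, and then passing to a limit --- is exactly the paper's strategy, and your treatment of the easy implications is adequate. But the step you defer as ``where most of the technical work lies'' is precisely where the proof has content, and as written there are two genuine gaps that a bare compactness-and-diagonal argument cannot close. First, Freiman--Ruzsa controls only the cardinality of $G_R$, not its diameter: a priori the translates can escape to infinity as $R \to \infty$, and then no convergent subsequence of the $G_R$ exists. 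The paper fixes this (Claim 1) by a covering argument: relative density of $A$ forces the $\abs{G_R}$ translates of $P_R$ (thickened by a unit ball) to cover a positive proportion of $B(0,R)$, so one of them has volume $\gg R^{\ee}$; the Macbeath--Kemperman continuous sumset inequality then shows that a bounded multiple of the symmetric progression covers all of $B(0,R)$, which permits replacing $G_R$ by a set contained in a ball of radius $O_K(1)$.

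Second, and more seriously, the generators of $P_R$ need not ``stabilise'' along any subsequence. The correct object to pass to the limit is the lattice $\Lambda_R \subset \RR^{\ee+k}$ spanned by the vectors $\bra{a_i^R, \vec e_i/l_i^R}$ (with $a_i^R$, $l_i^R$ the steps and side lengths of $P_R$), whose projection through a bounded window recovers $P_R$; but a sequence of discrete subgroups has a subsequential limit that is again discrete only given a uniform lower bound on the shortest nonzero vector. A short vector of $\Lambda_R$ corresponds to an element $c = \sum_i n_i a_i^R$ with $\norm{c}$ tiny and all $\abs{n_i} \ll l_i^R$, so that many multiples of $c$ lie in $P_R$ near the origin; the paper rules this out (Claim 2) by showing this would make $\abs{3A_R - 2A_R}$ exceed $K^5 \abs{A_R}$, contradicting the Pl\"unnecke--Ruzsa inequality. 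Your proposal mentions neither point, nor the fact (needed for both) that the Freiman--Ruzsa output can be taken to be a \emph{proper symmetric} GAP contained in $2A_R - 2A_R$: properness is what makes the lattice-and-window encoding faithful, and containment in $2A_R - 2A_R$ is what feeds the Pl\"unnecke argument. (A minor point: no discretisation is needed to apply Freiman--Ruzsa in $\RR^{\ee}$, since the theorem holds verbatim in any torsion-free abelian group.)
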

Let us briefly review the terminology used above; more extensive and precise discussion can be found in Sections \ref{sec:Additive} and \ref{sec:Quasicrystals}. The upper uniform asymptotic density of a set $A \subset \RR^d$ is given by 
\[ D^+(A) = \limsup_{R \to \infty} \sup_{x \in \RR^d} (2R)^{-d} \abs{ \set{ a \in A}{ \norm{a - x}_\infty < R}}. \] The difference set of  a set $A$ is given by $A-A = \set{a-b}{a,b \in A}$. A set $A$ is relatively dense if it intersects each ball of radius $R$ for some $R > 0$, and it is uniformly discrete if for some $r > 0$ each ball with radius $r$ contains at most one point in $A$. A cut-and-project set takes the form $\pi_1( \Gamma \cap (\RR^d \times \Omega))$ for some lattice $\Gamma  \subset \RR^{d+e}$, some bounded open set $\Omega \subset \RR^e$ and the standard projection $\pi_1 \colon \RR^{d+e} \to \RR^{d}$. 

It is easy to verify the chain of implications \eqref{it:01:D} $\Rightarrow$ \eqref{it:01:C} $\Rightarrow$ \eqref{it:01:B} $\Rightarrow$ \eqref{it:01:A}, so the substance of the above theorem is that \eqref{it:01:A} $\Rightarrow$ \eqref{it:01:D}. Implication \eqref{it:01:B} $\Rightarrow$ \eqref{it:01:C} was shown by Lagarias \cite[Thm.\ 1.1]{Lagarias-1996}, and later a short proof of \eqref{it:01:A} $\Rightarrow$ \eqref{it:01:C} was found by Lev and Olevskii \cite[Lem.\ 8]{LevOlevskii-2015}. The most difficult transition appears to be \eqref{it:01:C} $\Rightarrow$ \eqref{it:01:D} which due to Meyer \cite[Chpt.\ II, Thm. IV]{Meyer-book-1972}, see also \cite{Moody-1997}.

The above theorem bears a striking resemblance to the Freiman--Ruzsa theorem \cite{Freiman-book,Ruzsa-1989}, which is one of the cornerstones of additive combinatorics. In additive combinatorics one is primarily interested in the behaviour of finite subsets of abelian groups under addition. The Freiman--Ruzsa theorem provides a characterisation of finite sets $A$ such that the sumset $A+A = \set{a+b}{a,b \in A}$ is not much larger than $A$. 
\begin{theorem}[Freiman--Ruzsa]\label{thm:Freiman-Ruzsa}
	Let $A$ be a finite, non-empty subset of an abelian torsion-free group. Consider the conditions:
\begin{enumerate}
	\item[(i)$_{K}$] $\abs{A+A} < K \abs{A}$ for a constant $K \geq 2$;
	\item[(ii)$_{d,C}$] $A$ is contained in a generalised arithmetic progression $P$ of rank $d$ and size $\leq C \abs{A}$, for some constants $d \in \NN$ and $C$.
\end{enumerate}	
Then (i) and (ii) are equivalent in the following sense: For each $K$ there exist $d = d(K)$ and $C = C(K)$ such that (i)$_{K}$ implies (ii)$_{d,C}$. Conversely, for each $C,d$ there exists $K = K(C,d)$ such that (ii)$_{d,C}$ implies (i)$_{K}$. 
\end{theorem}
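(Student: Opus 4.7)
The plan is to handle the two implications separately, tackling the easy direction first. For (ii)$_{d,C}$ $\Rightarrow$ (i)$_K$, note that a GAP of rank $d$ is the image of a box in $\ZZ^d$ under an affine map, so $P+P$ is contained in the image of a doubled box, i.e.\ in a GAP of rank $d$ and size $\leq 2^d \abs{P}$. Hence $\abs{A+A} \leq \abs{P+P} \leq 2^d C \abs{A}$, and one may take $K = 2^d C$.

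For the substantive converse (i)$_K$ $\Rightarrow$ (ii)$_{d,C}$, I would follow the classical Ruzsa proof. Since the ambient group is torsion-free and $A$ is finite, the subgroup it generates embeds in a $\QQ$-vector space, so one may reduce to $A \subset \ZZ^n$ for some $n \in \NN$. The Pl\"{u}nnecke--Ruzsa inequality then upgrades the hypothesis $\abs{A+A} < K\abs{A}$ to the iterated control $\abs{kA - \ell A} \leq K^{k+\ell} \abs{A}$ for all $k,\ell \in \NN$, which is the workhorse for what follows.

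The main engine is the combination of Ruzsa's model lemma with Bogolyubov's lemma. For a suitable large prime $N$, Ruzsa's lemma furnishes a Freiman $8$-isomorphism sending $A$ to a dense subset $A' \subset \ZZ/N\ZZ$ with the same doubling behaviour. Bogolyubov's lemma then guarantees that $2A' - 2A'$ contains a Bohr set $B \subset \ZZ/N\ZZ$ cut out by a bounded number $t = t(K)$ of frequency conditions of the shape $\fpa{\xi_j x/N} < \rho$, with $1/\rho$ also bounded in terms of $K$. A geometry-of-numbers argument---Minkowski's second theorem applied to the lattice of frequency vectors---then extracts from within $B$ a proper generalised arithmetic progression $P'$ of rank at most $t$ and size comparable to $\abs{A}$. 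Pulling $P'$ back through the Freiman isomorphism produces a GAP $P_0 \subset 2A - 2A$ inside $\ZZ^n$. Finally, Ruzsa's covering lemma shows that $A$ is contained in a bounded number of translates of $P_0$, and absorbing those translates by inflating $P_0$ in a few extra directions yields the desired GAP $P$ of rank $d = d(K)$ and size $\leq C(K)\abs{A}$.

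The hardest step, both conceptually and technically, is the Bogolyubov-to-GAP passage via geometry of numbers: one must convert the character-theoretic description of $B$ into an explicit box-structure, and it is precisely here that the bound on the rank $d(K)$ emerges. The remaining ingredients---Pl\"{u}nnecke--Ruzsa, the model lemma, Bogolyubov's lemma, and Ruzsa's covering lemma---are by now standard additive-combinatorial machinery whose function is to funnel the problem into a form where this geometric extraction can be performed.
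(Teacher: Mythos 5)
The paper does not actually prove Theorem \ref{thm:Freiman-Ruzsa}---it is quoted from Freiman and Ruzsa (with the variant Theorem \ref{thm:FR} imported from Tao--Vu)---and your outline is precisely the standard Ruzsa argument underlying those references: reduction to $\ZZ^n$, Pl\"unnecke--Ruzsa, the modelling lemma, Bogolyubov's lemma combined with Minkowski's second theorem to extract a proper GAP inside $2A-2A$, and finally Ruzsa's covering lemma. Your easy direction via $\abs{A+A}\leq\abs{P+P}\leq 2^{d}\norm{P}\leq 2^{d}C\abs{A}$ likewise matches the paper's own remark that $\abs{2P}\leq 2^{d}\abs{P}$, so the approach is essentially the same as the one the paper relies on.
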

A generalised arithmetic progression of rank $d$ is a set of the form $P_1 + P_2 + \dots + P_d$ where $P_i$ are arithmetic progressions, for details see Section \ref{sec:Additive}.
The implication (ii) $\Rightarrow$ (i) is, again, comparatively simple, so the content of the theorem is the reverse implication (i) $\Rightarrow$ (ii). 

Interestingly, despite the similarity between Theorems \ref{thm:char_of_Meyer} and \ref{thm:Freiman-Ruzsa}, their proofs use rather different techniques. Hence, it is natural to inquire if the similarity is only superficial or if there is a more tangible connection. This line of inquiry was suggested by Lev and Olevskii \cite[Sec.{} 11.3]{LevOlevskii-2017}. The purpose of this paper is to exhibit such a connection by showing that Theorem \ref{thm:Freiman-Ruzsa} together with some basic tools in additive combinatorics implies Theorem \ref{thm:char_of_Meyer} in a relatively elementary way. 

Additionally, we point out that quantitative variants of Theorem  \ref{thm:Freiman-Ruzsa} have been extensively investigated by many authors, culminating in the work of Sanders \cite{Sanders-2013}. Conversely, quantitative bounds in Theorem \ref{thm:char_of_Meyer} are rarely addressed. Our argument preserves quantitative bounds, and as a consequence we obtain the following new result. We use $D^{-}$ to denote lower uniform asymptotic density:
 \[ D^-(A) = \liminf_{R \to \infty} \inf_{x \in \RR^d} (2R)^{-d} \abs{ \set{ a \in A}{ \norm{a - x}_\infty < R}}. \]

\begin{theorem}\label{thm:main-quant}
	Suppose that $A \subset \RR^{\ee}$ is relatively dense and $D^{+}(A-A) < KD^{-}(A)$ for some $K \geq 2$. Then $A \subset M+F$, where $F$ is a finite set and $M$ is a cut-and-project set with at most $O\bra{\bra{d + \log K}^7}$ internal dimensions.
\end{theorem}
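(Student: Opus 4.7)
The plan is to reduce Theorem~\ref{thm:main-quant} to the finite additive combinatorics of Theorem~\ref{thm:Freiman-Ruzsa} by restricting $A$ to a large box, applying the quantitative form of Freiman--Ruzsa to produce a generalised arithmetic progression, and then converting this GAP into a cut-and-project set, tracking the internal dimension throughout.

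For the first step, set $A_R = A \cap B_\infty(0,R)$. From the density hypotheses and the inclusion $A_R - A_R \subset (A-A) \cap B_\infty(0,2R)$, as $R \to \infty$ we have
\[
\frac{|A_R - A_R|}{|A_R|} \leq \frac{D^+(A-A)(4R)^d(1+o(1))}{D^-(A)(2R)^d(1-o(1))} \longrightarrow 2^d \cdot \frac{D^+(A-A)}{D^-(A)} < K \cdot 2^d,
\]
so for all sufficiently large $R$ the finite set $A_R$ has additive doubling at most $K' = O(K 2^d)$ in $\RR^d$.

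Next, I invoke the quantitative form of Theorem~\ref{thm:Freiman-Ruzsa} to place $A_R$ inside a GAP $P_R \subset \RR^d$. Tracking the dependence on $K'$ and $d$ carefully (for instance by separating the contributions of the $d$ coordinate directions and the combinatorial doubling) yields rank $r = O(d \log K)$, uniform in $R$. The main obstacle is then to promote these finite GAPs into a single unbounded progression $P \subset \RR^d$ such that $A \subset P + F_0$ for a finite set $F_0$: this requires a compactness and stability argument in the space of GAPs of bounded rank, using the relative density of $A$ to extract a subsequence of $R$'s along which the generators $a_1, \dots, a_r$ stabilise (after normalising a base point in a fundamental domain) while the side-lengths $N_i(R)$ grow to infinity. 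Some care is needed here so that the discrepancy $F_0$ really is finite and not merely sparse, which is where the Lagarias/Lev--Olevskii argument for \eqref{it:01:A} $\Rightarrow$ \eqref{it:01:C} should plug in.

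Finally, I convert $P$ into a cut-and-project set by the standard construction: setting $e = r$, one builds a full-rank lattice $\Gamma \subset \RR^{d+e}$ from the generators of $P$ together with auxiliary ``internal'' coordinates tracking the scaled indices $n_i/N_i$, and one chooses a bounded open window $\Omega \subset \RR^e$ (a product of intervals encoding the constraints $0 \leq n_i < N_i$) so that $\pi_1(\Gamma \cap (\RR^d \times \Omega)) = P$ up to a translation. Together with the finite error from the previous step, this yields $A \subset M + F$ for a cut-and-project set $M$ with $e = O(d \log K)$ internal dimensions, as required.
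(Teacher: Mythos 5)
Your overall architecture (restrict $A$ to boxes, apply a quantitative Freiman--Ruzsa theorem to get GAPs of controlled rank, pass to a limit, convert to a cut-and-project set) matches the paper's, and your doubling estimate for $A_R$ is fine. But the middle step --- ``promote these finite GAPs into a single unbounded progression $P\subset\RR^d$ by a compactness and stability argument in which the generators $a_1,\dots,a_r$ stabilise while the side-lengths grow'' --- is where the real difficulty lives, and as described it does not work. There is no a priori compactness for the generator data: the $a_i^N$ can tend to $0$ (indeed $\abs{a_i^N}\le 8N/l_i^N$ is all one knows), the base points of the GAPs and the translate sets $F_N$ can escape to infinity, and a ``limit GAP'' with infinite side-lengths is just the (generally dense) subgroup generated by the limiting steps, so your final window construction with scaled indices $n_i/N_i$ has nothing finite left to scale by. The paper's proof avoids exactly this by reversing the order of operations: it lifts \emph{each} $Q_N$ to a lattice $\Lambda_N\subset\RR^{d+e}$ spanned by $(a_i^N,\vec e_i/l_i^N)$ \emph{before} taking any limit, proves that the $\Lambda_N$ are $r$-uniformly discrete for a uniform $r>0$ (Claim 2, which uses the Pl\"unnecke inequality, Theorem \ref{thm:Plunnecke}, to rule out short vectors --- this is the step that prevents degeneration of the generators), tames the translates by replacing $F_N$ with a uniformly bounded $F_N'$ via the Macbeath--Kemperman volume argument (Claim 1, Corollary \ref{cor:Kemperman}), and only then takes a limit in the space of uniformly discrete subsets of $\RR^{d+e}$, where compactness is actually available. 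Your proposal contains neither of these two ingredients, and your fallback of importing the Lagarias/Lev--Olevskii implication to guarantee that the discrepancy set $F_0$ is finite replaces the very content the argument is supposed to supply.

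A secondary issue: the claimed rank bound $r=O(d\log K)$ does not follow from ``separating the coordinate directions from the combinatorial doubling.'' The doubling of $A_R$ is $K'=O(2^dK)$, and the best unconditional Freiman--Ruzsa-type bounds (Theorem \ref{thm:FR-2}) give rank polylogarithmic in $K'$, i.e.\ $(d+\log K)^{6+o(1)}$; a genuinely linear bound $O(\log K')$ is the Polynomial Freiman--Ruzsa conjecture. So this part of your argument needs either an explicit appeal to the best known quantitative input (with the correspondingly weaker exponent) or a justification of the separation you allude to.
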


The exponent $7$ is not optimal, and can be replaced with any value strictly larger than $6$. Further improvements would follow from stronger quantitative versions of the Freiman--Ruzsa theorem. In particular, it is plausible that $O(d+\log K)$ internal dimensions suffice. See Section \ref{sec:Epilogue} for further discussion.

\subsection*{Acknowledgements}
The author is grateful to Nir Lev for introducing him to this subject and for spending a considerable amount of time on fruitful conversations, and to Tom Sanders for informative comments on the quantitative variants of the Freiman--Ruzsa theorem. The author also thanks  Michal Kupsa and Rudi Mrazovi\'{c} for helpful remarks, and the anonymous referee for careful reading of this paper and valuable corrections.

While writing this paper, the author was supported by the ERC grant ErgComNum 682150 at the Hebrew University of Jerusalem. Currently, the author is working within the framework of the LABEX MILYON (ANR-10-LABX-0070) of Universit\'{e} de Lyon, within the program "Investissements d'Avenir" (ANR-11-IDEX-0007) operated by the French National Research Agency (ANR). The author also acknowledges support from the Foundation for Polish Science (FNP).

\section{Additive combinatorics}\label{sec:Additive}
\subsection{Basic definitions} Let $Z$ be an abelian group. For sets $A, B \subset Z$, we let $A+B$ denote the \emph{sumset} $\set{a+b}{a \in A,\ b \in B}$, and similarly we let $A-B = \set{a-b}{a \in A,\ b \in B}$ denote the \emph{difference set}. For and integer $k \geq 1$, we let $kA$ denote the $k$-fold sumset of $A$, whence $1 A = A$ and $(k+1)A = A + kA$. The \emph{doubling} of $A$ (assuming additionally that $A \neq \emptyset$) is the ratio $\abs{2A}/\abs{A}$. If $Z$ is torsion-free then it is well known that $\abs{2A} \geq 2\abs{A}-1$ (see e.g.\ \cite[Thm.\ 5.5]{TaoVu-book}). 

A \emph{generalised arithmetic progression of rank $d$} in $Z$ (or GAP for short) is a subset $Z$ of the form
\begin{equation}\label{eq:38:00}
	P = \set{ n_1a_1 + n_2 a_2 + \dots + n_d a_d + b }{ 0 \leq n_i \leq l_i \text{ for all } 1 \leq i \leq d}
\end{equation}
for some $a_1,a_2,\dots,a_d,b \in Z$ and $l_1,l_2,\dots,l_d \in \NN_0$. Whenever we speak of a GAP, we always have in mind a fixed choice of the steps $a_i$, base point $b$ and side lengths $l_i$. The \emph{size} of the GAP $P$ given by \eqref{eq:38:00} is defined as
\begin{equation}
	\norm{P} = \prod_{i=1}^d (l_i+1),
\end{equation}
and may in general be larger than the cardinality $\abs{P}$ (i.e., the number of distinct elements of $P$). The GAP $P$ is \emph{proper} if $\norm{P} = \abs{P}$, or equivalently if all of the elements $n_1a_1 + n_2 a_2 + \dots + n_d a_d + b$ in \eqref{eq:38:00} are distinct. We note that the set $P$ given by \eqref{eq:38:00} is also a GAP of rank $d+1$ (with a side of length $0$). The GAP $P$ has doubling bounded in terms of the rank: $\abs{2P} \leq 2^d \abs{P}$.
We will also be interested in \emph{symmetric} GAPs, which take the form
\begin{equation}\label{eq:38:01}
	Q = \set{ n_1a_1 + n_2 a_2 + \dots + n_d a_d}{ \abs{n_i} \leq l_i \text{ for all } 1 \leq i \leq d},
\end{equation}
for some $a_1,a_2,\dots,a_d \in Z$ and $l_1,l_2,\dots,l_d \in \NN_0$. Of course, \eqref{eq:38:01} defines a GAP (with side lengths $2l_i+1$ and base point $-\sum_{i=1}^{d} l_i a_i $). 

\subsection{Freiman--Ruzsa}

We record a classical inequality due to Pl\"{u}nnecke \cite{Plunnecke-1970}, later refined by Ruzsa \cite{Ruzsa-1989} and used as a key step in the proof of the Freiman--Ruzsa theorem; a considerably shorter proof was subsequently found by Petridis \cite{Petridis-2012}.
\begin{theorem}\label{thm:Plunnecke} Fix $K \geq 1$.
	Let $A,B$ be finite subsets of an abelian group $Z$, and suppose that $\abs{A+B} \leq K \abs{A}$. Then $\abs{kB-lB} \leq K^{k+l}\abs{A}$ for each $k,l \geq 0$. 
\end{theorem}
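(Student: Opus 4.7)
The plan is to prove the inequality using Petridis's streamlined argument, which avoids the graph-theoretic machinery of Plünnecke's original approach and Menger's theorem. The key move is to pass to a subset $A' \subset A$ that minimises the doubling-type ratio with respect to $B$, and then run a short induction to control iterated sumsets $A' + kB$. Difference sets are handled at the end via a Ruzsa-triangle step.

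More concretely, first I would pick a non-empty subset $A' \subset A$ that minimises the ratio $K' := \abs{A'+B}/\abs{A'}$; by hypothesis $K' \leq K$, and by construction $\abs{X + B} \geq K' \abs{X}$ for every non-empty $X \subset A'$. The crucial lemma to establish is that
\[
\abs{A' + B + C} \leq K' \abs{A' + C} \quad \text{for every finite } C \subset Z.
\]
I would prove this by induction on $\abs{C}$. The base case $\abs{C}=1$ is the definition of $K'$. For the inductive step, write $C = C' \cup \{c\}$ with $c \notin C'$, and let $A'' = \set{x \in A'}{x + B + c \subset A' + B + C'}$. Then $A'' + B + c$ is contained in the overlap between $A'+B+C'$ and $A'+B+c$, so inclusion-exclusion gives
\[
\abs{A' + B + C} \leq \abs{A'+B+C'} + \abs{A'+B} - \abs{A''+B+c}.
\]
By the inductive hypothesis, the minimality of $K'$ (applied to $A'' \subset A'$), and the identity $\abs{A'+C} = \abs{A'+C'} + \abs{(A'\setminus A'') + c}$, this rearranges to the desired bound $K'\abs{A'+C}$.

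Iterating the lemma with $C = (k-1)B$ yields $\abs{A' + kB} \leq (K')^k \abs{A'}$ for every $k \geq 0$. To pass from sums to differences, I would invoke the ``all-sum'' Ruzsa triangle inequality $\abs{X}\cdot\abs{Y-Z} \leq \abs{X+Y}\cdot\abs{X+Z}$ (proved by noting that the map $x \mapsto (x+y, x+z)$ is injective for each fixed representative $y - z$, and summing over $Y - Z$). Applied with $X = A'$, $Y = kB$, $Z = lB$, this gives
\[
\abs{A'} \cdot \abs{kB - lB} \leq \abs{A' + kB} \cdot \abs{A' + lB} \leq (K')^{k+l} \abs{A'}^2,
\]
and dividing through, together with $K' \leq K$ and $\abs{A'} \leq \abs{A}$, delivers $\abs{kB - lB} \leq K^{k+l}\abs{A}$.

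The main obstacle is the inductive step of Petridis's lemma: setting up the subset $A''$ correctly so that the overlap bound and minimality of $K'$ combine to produce exactly the coefficient $K'$ rather than something weaker. Once that is in place, the iteration and the Ruzsa triangle step are essentially formal, and the symmetric roles of $k$ and $l$ in the final bound arise automatically from the two factors in the triangle inequality.
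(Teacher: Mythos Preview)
The paper does not actually prove this theorem; it is recorded as a classical inequality of Pl\"{u}nnecke, refined by Ruzsa, with a citation and no argument. Your Petridis-style proof is correct and is nowadays the standard elementary route to this result, so supplying it is a genuine addition rather than a duplication.

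One small imprecision: what you call ``the identity $\abs{A'+C} = \abs{A'+C'} + \abs{(A'\setminus A'') + c}$'' is in fact only the inequality $\abs{A'+C} \geq \abs{A'+C'} + \abs{(A'\setminus A'') + c}$. The point is that if $a \in A' \setminus A''$ then $a+c \notin A'+C'$ (since $a+c = a'+c'$ would force $a+B+c \subset A'+B+C'$), so $(A'\setminus A'')+c$ lies in $(A'+c)\setminus(A'+C')$; but the reverse inclusion need not hold. Fortunately the inequality goes the right way for your induction, so the argument is unaffected.
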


We will need the following variant of the Freiman--Ruzsa theorem, which we will apply with $B = -A$.

\begin{theorem}\label{thm:FR}
	Fix $K \geq 2$. Let $A,B$ be finite subsets of a torsion-free abelian group $Z$, and suppose that $\abs{A} = \abs{B} = n$ and $\abs{A+B} \leq Kn$ for some $n > 0$. Then there exists a proper symmetric GAP $Q$ of rank $O_K(1)$ and cardinality $\abs{Q} = O_K(n)$ as well as  a finite set $F$ of cardinality $\abs{F} = O_K(1)$ such that $Q \subset 2A-2A$ and $A \subset F+Q$.
\end{theorem}

This formulation is slightly different than the results that can be found in \cite{Freiman-book,Ruzsa-1989}. In the symmetric case (that is, $A = B$), this follows directly from \cite[Thm.{} 5.32]{TaoVu-book}. 
To deduce the asymmetric case from the symmetric one, it suffices to notice that, as a special case of the Pl\"{u}nnecke inequality (Theorem \ref{thm:Plunnecke}), if  $\abs{A} = \abs{B} = n$ and $\abs{A+B} \leq Kn$  then $\abs{2A} \leq K^2 n$.

\subsection{Continuous variants}
Lastly, we briefly discuss set addition in the continuous setting. We let $\lambda_X$ denote the Lebesgue measure on the space $X$, where $X$ is either $\RR^d$ or $\RR^d/\ZZ^d$ for some $d \in \NN$. When there is no risk of confusion, we write $\lambda$ in place of $\lambda_X$. The following theorem is due to Macbeath \cite{Macbeath-1953} and was later generalised to arbitrary compact groups by Kemperman \cite{Kemperman-1964}, see also \cite[Ex.{} 5.1.12]{TaoVu-book}. For further discussion, we also refer to \cite{Bilu-2012}.

\begin{theorem}\label{thm:Kemperman}
	Let $A,B \subset \RR^d/\ZZ^d$ be open. Then $\lambda(A+B) \geq \min\bra{\lambda(A)+\lambda(B),1}$.
\end{theorem}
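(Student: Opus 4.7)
The plan is to split based on whether $\lambda(A)+\lambda(B)$ exceeds $1$: dispose of the large-measure case by a direct pigeon-hole argument, reduce the substantive small-measure case to dimension one, and invoke the classical one-dimensional inequality (Raikov's theorem) as the base case.

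\textbf{Case $\lambda(A)+\lambda(B) > 1$.} If some $x \in \RR^d/\ZZ^d$ were not in $A+B$, then $x - A$ and $B$ would be disjoint open subsets of $\RR^d/\ZZ^d$, so $\lambda(A)+\lambda(B) = \lambda(x-A)+\lambda(B) \leq 1$, contradicting the hypothesis. Hence $A+B = \RR^d/\ZZ^d$ and $\lambda(A+B) = 1$. The boundary case $\lambda(A)+\lambda(B) = 1$ follows by approximation: take an open $B_\varepsilon \supset B$ with $\lambda(B_\varepsilon) > \lambda(B)$, apply the strict case to $(A, B_\varepsilon)$, and let $\varepsilon \to 0$.

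\textbf{Case $\lambda(A)+\lambda(B) \leq 1$.} Here I would reduce to $d=1$ by induction. Write $\RR^d/\ZZ^d = \RR/\ZZ \times \RR^{d-1}/\ZZ^{d-1}$, and for each $y \in \RR^{d-1}/\ZZ^{d-1}$ let $A_y = \{t : (t,y) \in A\}$ and similarly $B_y$. Since $(A+B)_z \supset A_y + B_{z-y}$ for every $y$, one combines the one-dimensional bound on each pair of fibers with the $(d-1)$-dimensional inductive hypothesis applied to the ``section-measure'' functions $y \mapsto \lambda(A_y)$ and $y \mapsto \lambda(B_y)$, using Fubini to tie everything together. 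Handling the $\min$ from the one-dimensional estimate against the integration is technical but standard, analogous to the inductive proof of Brunn--Minkowski on $\RR^d$.

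The base case $d=1$ is Raikov's theorem and is the technical heart. A naive lift of $A, B$ to subsets $\tilde A, \tilde B$ of a fundamental interval of $\RR$, followed by the one-dimensional Brunn--Minkowski inequality on $\RR$, fails because the projection $\RR \to \RR/\ZZ$ can double-cover the lifted sumset $\tilde A + \tilde B$, losing a factor of two. The classical remedy is a compression argument such as the Kneser $e$-transform: for a parameter $e \in \RR/\ZZ$, replace $(A, B)$ by $(A \cup (B-e), B \cap (A+e))$. A short computation shows that this operation preserves $\lambda(A) + \lambda(B)$ and only shrinks $A+B$. Iterating for a well-chosen sequence of parameters aligns $A$ and $B$ inside a common fundamental domain in which the lift is injective on the sumset, whereupon the one-dimensional Brunn--Minkowski inequality on $\RR$ yields $\lambda(A+B) \geq \lambda(A)+\lambda(B)$.

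The main obstacle is this $d=1$ base case: making the compression iteration precise (termination and the resulting alignment) is where most of the effort sits. By contrast, the pigeon-hole argument for large measures and the Fubini-style dimensional induction are comparatively routine.
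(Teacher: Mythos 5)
First, a point of reference: the paper does not prove this theorem at all --- it is quoted as a classical result of Macbeath, later generalised by Kemperman, with a pointer to \cite[Ex.{} 5.1.12]{TaoVu-book} --- so your proposal is being measured against the literature rather than against an argument in the paper. Your architecture (Raikov's theorem on $\RR/\ZZ$ as the base case plus a fiberwise induction on dimension) is indeed the classical Macbeath route, and two of your ingredients check out: the pigeonhole observation that $x \notin A+B$ forces $x-A$ and $B$ to be disjoint, and the verification that the Kneser transform $(A,B) \mapsto (A \cup (B-e),\, B \cap (A+e))$ preserves $\lambda(A)+\lambda(B)$ while not enlarging $A+B$.

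However, the two steps you defer are exactly where the theorem lives, and as sketched both have genuine gaps. In the base case $d=1$, ``iterating for a well-chosen sequence of parameters'' until $A$ and $B$ align has no termination mechanism in the continuous category: in Kneser's theorem for finite sets the cardinality of $B$ strictly decreases and the induction bottoms out, but here the relevant quantities are real numbers, so a limiting or compactness/supremum argument over all transforms (or Raikov's original density-point argument) is unavoidable --- and that is precisely the content being omitted. In the inductive step, the fiberwise bound gives $\lambda_1\bra{(A+B)_z} \geq \sup_y \min\bra{\lambda_1(A_y)+\lambda_1(B_{z-y}),1}$, so what you must integrate over $z$ is a \emph{sup-convolution} of the section-measure functions $y \mapsto \lambda_1(A_y)$ and $y \mapsto \lambda_1(B_y)$. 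The inductive hypothesis as you state it concerns sets, not functions; you need a strictly stronger functional version of the theorem on $\RR^{d-1}/\ZZ^{d-1}$ (or a layer-cake reduction), and the cap $\min(\cdot,1)$ prevents a verbatim transcription of the Brunn--Minkowski induction. Finally, a small but real error: your approximation for the boundary case $\lambda(A)+\lambda(B)=1$ goes the wrong way. Enlarging $B$ to $B_\varepsilon$ shows $A+B_\varepsilon$ is all of the torus, which says nothing about $\lambda(A+B)$; indeed for $A=B=(0,1/2)$ in $\RR/\ZZ$ the sumset misses the point $0$, so it is not everything, merely of full measure. That case must be absorbed into the substantive argument rather than the pigeonhole one.
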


We point out that for open $A,B \subset \RR^d/\ZZ^d$, the set $A+B$ is again open and hence in particular measurable. Similarly, if $A,B \subset \RR^d/\ZZ^d$ are closed then $A+B$ is closed. On the other hand, it is a classical result that $A+B$ need not be measurable when $A$ and $B$ are measurable \cite{Sierpinski-1920}.  

By a parallelepiped in $\RR^d$ we mean a set of the form
\begin{align*}
	P &= x + [0,1]v_1 + [0,1]v_2 + \dots + [0,1]v_d
	\\& = \set{ x + t_1 v_1 + t_2 v_2 + \dots + t_d v_d}{t_1,t_2,\dots,t_d \in [0,1]},
\end{align*}
where $x,v_1,v_2,\dots,v_d \in \RR^d$ and $v_1,v_2,\dots,v_d$ are linearly independent. (Thus, for the purposes of this paper, parallelepipeds are closed and non-degenerate.) We will use the following elementary consequence of the theorem above. 

\begin{corollary}\label{cor:Kemperman}
	Let $d \in \NN$, $\e \in (0,1)$, let $P \subset \RR^d$ be a parallelepiped, and let $A \subset P$ be a closed set with $\lambda(A) \geq \e \lambda(P)$ and $\lambda( A \setminus \inter A) = 0$. Then for each integer $k \geq d\floor{8/\e}$ there exists a vector $b \in \RR^d$ such that $kA \supset P+b$.
\end{corollary}
\begin{proof}
	Applying an affine transformation, we may assume that $P = [0,1]^d$.

	Suppose first that $d = 1$. Replacing $A$ with a translate, we may assume that $0 \in A$. Put $t := \max A$, $l := \ceil{t/\e}$, and let $\bar A$ be the image of $A$ in $\RR/t\ZZ$. By Theorem \ref{thm:Kemperman}, the set $l (\inter \bar A)$ has full measure. Since $l \bar A$ is closed it follows that $l \bar A = \RR/t\ZZ$. Hence, for each $x \in [0,t]$ there exists $y \in l A$ and $0 \leq m \leq l$ such that 
	\[
		x = y - mt = y + (l - m)t - l t \in (2l - m) A - l t \subset 2l A - l t.
	\]
	Thus, we have
	\(
		2l A \supset [0,t] + l t.
	\)
	Let $k \geq 2\ceil{1/t} l$ and put $b := \ceil{1/t}lt$. Then 
	\[
		k A \supset \ceil{1/t} \bra{2l A} \supset \ceil{1/t} \bra{[0,t]+lt} \supset [0,1] + b.
	\] 
	It remains to note that $2\ceil{1/t} l$ is an integer with $2\ceil{1/t} l \leq 2 \cdot (2/t) \cdot (2t/\e) \leq 8/\e$.
	
	Next, consider the case where $d \geq 2$. For each $1 \leq i \leq d$, we can find a line $\ell_i$ parallel to $\vec e_i$, the $i$-th vector in the standard basis of $\RR^d$, such that $\lambda_{\ell_i}(A \cap \ell_i) \geq \e$. Put $k_1 := \floor{8/\e}$. It follows from the $1$-dimensional case that $k_1 A$ contains a unit interval $ [0,1]\vec e_i + b_i$ for some $b_i \in \RR^d$. Let $z \in A$ be an arbitrary point, let $k \geq d k_1$ and put $b := \sum_{i=1}^d b_i + (k-dk_1)z$. Then
	\[
		kA  \supset [0,1]\vec e_1 + [0,1]\vec e_2 + \dots + [0,1]\vec e_d + b = [0,1]^d + b. \qedhere
	\] 
\end{proof}

\section{Quasicrystals}\label{sec:Quasicrystals}
\subsection{Basic definitions}
Let $X$ be a metric space, and let $A$ be a subset of $X$. We say that $A$ is $R$-\emph{relatively dense} ($R > 0$) if each closed ball of radius $R$ contains a point in $A$, that is, $\bBall{X}{x}{R} \cap A \neq \emptyset$ for all $x \in X$. Accordingly, $A$ is relatively dense if it is $R$-relatively dense for some $R > 0$, and the infimum of the admissible values of $R$ is called the \emph{covering radius}. In a similar vein, $A$ is $r$-\emph{uniformly discrete} ($r>0$) if each open ball with radius $r$ contains at most one point of $A$: $\abs{\Ball{X}{x}{r} \cap A} \leq 1$ for all $x \in X$. Also, $A$ is uniformly discrete if it is $r$-uniformly discrete for some $r>0$. If $A$ is both relatively dense and uniformly discrete then it is called a \emph{Delone set}.

For the sake of concreteness, we will from now on restrict to the case where $X = \RR^d$ for some $d \in \NN$, equipped with the supremum norm. (The choice of the norm does not play a significant role, but working with the supremum norm is slightly more convenient.) We define the upper and lower uniform density of a set $A \subset \RR^d$ as
\begin{align*}
	D^+(A) = \limsup_{L \to \infty} \sup_{x \in \RR^d} \frac{ \abs{A \cap \BallR{d}{x}{L}} }{ \lambda \bra{\BallR{d}{x}{L}}},\quad 
	D^-(A) = \liminf_{L \to \infty} \inf_{x \in \RR^d} \frac{ \abs{A \cap \BallR{d}{x}{L}} }{ \lambda \bra{\BallR{d}{x}{L}}},
\end{align*}
where $\BallR{d}{x}{L} = \prod_{i=1}^d (x_i-L,x_i+L)$. If $A$ is $R$-relatively dense then $D^-(A) \geq 1/(2R)^d > 0$, which can be easily verified by noticing that $\BallR{d}{x}{L}$ contains a collection of $(L/R)^d - O_{d,R}((L/R)^{d-1})$ pairwise disjoint open balls $\BallR{d}{y}{R}$ with $y \in \BallR{d}{x}{L}$. Likewise, if $A$ is 
 $r$-uniformly discrete then $D^+(A) \leq 1/r^d < \infty$.

\subsection{Limits of sets}\label{ssec:SetLimit}
We will need a suitable notion of convergence of sets. For a sequence of sets $A_n \subset \RR^d$ ($n \in \NN$) we will say that a set $A \subset \RR^d$ is the \emph{Kuratowski limit} of $A_n$ as $n \to \infty$, denoted $A = \klim_{n \to \infty} A_n$, if the following two conditions are satisfied. (Similar notion of convergence appears e.g.{} in \cite{Solomyak-1995}.)
\begin{enumerate}[wide]
\item For each $x \in A$, for each open neighbourhood $x \in U \subset \RR^d$, for all sufficiently large $n \in \NN$ we have $A_n \cap U \neq \emptyset$.
\item For each $x \in \RR^d \setminus A$, there exists an open neighbourhood $x \in U \subset \RR^d$ such that for all sufficiently large $n \in \NN$ we have $A_n \cap U = \emptyset$.
\end{enumerate}
We will review only the basic properties of Kuratowski limits and refer e.g.{} to \cite{Beer-1993} for further details.  
The Kuratowski limit is not guaranteed to exist, but when it does it is unique. Additionally, the Kuratowski limit, if it exists, is a closed set. Moreover, each sequence of subsets of $\RR^d$ has a convergent subsequence (cf.{} \cite[Thm.{} 5.2.11]{Beer-1993}) 

\begin{lemma}\label{lem:klim-props}
	Let $A_n \subset \RR^d$ ($n \in \NN$) be a sequence of sets and let $A = \klim_{n \to \infty} A_n$.
\begin{enumerate}
\item Let $r > 0$. If $A_n$ is $r$-uniformly discrete for each $n \in \NN$ then also $A$ is  $r$-uniformly discrete.
\item Let $R > 0$. If $A_n$ is $R$-relatively dense for each $n \in \NN$ then also $A$ is  $R$-relatively dense.
\item If $A_n$ is a group for each $n \in \NN$ then also $A$ is a group.
\end{enumerate} 
\end{lemma}
\begin{proof}
\begin{enumerate}
\item For the sake of contradiction, suppose an open ball $\BallR{d}{x}{r}$ contained two points $y,y' \in A \cap \BallR{d}{x}{r}$. Let $U,U' \subset \BallR{d}{x}{r}$ be two open, disjoint neighbourhoods of $y,y'$ respectively. For sufficiently large $n$ we have $A_n \cap U \neq \emptyset$ and $A_n \cap U' \neq \emptyset$. Hence, $\abs{A_n \cap \BallR{d}{x}{r}} \geq 2$, contrary to the assumption that $A_n$ are $r$-uniformly discrete.
\item Consider any $x \in \RR^d$. Since $A_n$ is $R$-relatively dense for each $n$, there exist points $y_n \in A_n \cap \bBallR{d}{x}{R}$. Let $y$ be an accumulation point of $y_n$. Directly by the definition of the Kuratowski limit, we see that $y \not \in \RR^d \setminus A$, and thus $y \in A$. It remains to notice that $y \in \bBallR{d}{x}{R}$.
\item It is enough to show that for each $x,y \in A$ also $x-y \in A$. By definition of the Kuratowski limit, there exist sequences $x_n,y_n \in A_n$ ($n \in \NN$) such that $x_n \to x$ and $y_n \to y$ as $n \to \infty$. Since $A_n$ is a group, $x_n - y_n \in A_n$. It remains to note that $x_n - y_n \to x-y$ as $n \to \infty$, and hence $x-y \in A$.  
 \qedhere
\end{enumerate}
\end{proof}

\subsection{Cut-and-project sets}
A \emph{lattice} in $\RR^d$ is a subgroup of $\RR^d$ that is both relatively dense and (uniformly) discrete, or equivalently a subgroup of $\RR^d$ generated by $d$ linearly independent vectors. For a more detailed discussion, see e.g.{} \cite[Sec.{} 3]{TaoVu-book}.
For the purposes of this paper, we define \emph{cut-and-project} sets in $\RR^d$ to be the sets of the form
\begin{equation}
\label{eq:78:00}
	M = \pi_1\bra{\Gamma \cap \bra{ \RR^d \times \Omega}}
\end{equation}
where $\pi_1 \colon \RR^d \times \RR^e \to \RR^d$ is the projection onto the first coordinate, $e \in \NN_0$, $\Gamma$ is a discrete subgroup of $\RR^{d+e}$ and $\Omega \subset \RR^e$ is open and bounded. 
When speaking of a cut-and-project set, we always have in mind a representation as in \eqref{eq:78:00}. For later reference, $\pi_2 \colon \RR^d \times \RR^e \to \RR^e$ is defined accordingly as the projection onto the second coordinate. We will occasionally call $e$ the number of \emph{internal} dimensions of $M$. We note that sets of the form \eqref{eq:78:00} are also considered under different topological assumptions on $\Omega$; for instance \cite{Lagarias-2000} requires $\Omega$ to be compact. Since we are ultimately interested in showing that a given set is contained in a bounded number of translates of a cut-and-project set, we may freely replace $\Omega$ with any larger set, and thus the topological properties of $\Omega$ do not play a major role in our main result. We emphasise that we do not impose any of the following commonly included conditions:
\begin{enumerate}[label=(\Roman*),ref=\Roman*]
\item\label{it:00:A} $\Gamma$ is a lattice;
\item\label{it:00:B} $\pi_1|_{\Gamma}$ is injective;
\item\label{it:00:C} $\pi_2(\Gamma)$ is dense in $\RR^e$.
\end{enumerate}
In particular, a cut-and-project set may be finite or even empty. 

We also define a \emph{shift-cut-and-project} set to be any set of the form $F+M$ where $F$ is a finite set and $M$ is a cut-and-project set. Allowing for a finite number of shifts lets us ensure conditions  \eqref{it:00:A}, \eqref{it:00:B}, \eqref{it:00:C} under additional mild assumptions, as the following proposition shows. In particular, in Theorem \ref{thm:char_of_Meyer}\eqref{it:01:D} one may freely assume that the set $M$ satisfies \eqref{it:00:A}, \eqref{it:00:B}, \eqref{it:00:C}.

\begin{proposition}\label{lem:cut-and-project-WLOG}
	Let $A \subset \RR^d$ be a relatively dense shift-cut-and-project set. Then $A$ is contained in a shift-cut-and-project set $F+M$ where $F$ is finite and $M$ is a cut-and-project set satisfying \eqref{it:00:A}, \eqref{it:00:B} and \eqref{it:00:C}.
\end{proposition}
\begin{proof}
	Among all shift-cut-and-project sets $F+M$ containing $A$ with $F$ finite and $M$ given by \eqref{eq:78:00}, pick one with the least possible number of internal dimensions $e$. We will show that $M$ satisfies \eqref{it:00:A}, \eqref{it:00:B} and \eqref{it:00:C}.
	
	\textit{Item \eqref{it:00:A}:}
	Suppose first that $\Gamma$ was not a lattice. Since $\Gamma$ is assumed to be a discrete subgroup, it follows that $\Gamma$ is not relatively dense. Hence, $\Gamma$ is contained in some codimension $1$ subspace $U_0 < \RR^d \times \RR^e$. Let $v \in \RR^d \times \RR^e$ be a unit vector orthogonal to $U_0$, meaning that $U_0 = \set{u \in \RR^d \times \RR^e}{ \left< u,v \right> = 0}$. 
	
	For $x \in \RR^d$, let $\gamma(x)$ be the element of $\Gamma$ that is closest to $(x,0) \in \RR^d \times \RR^e$ (if there are two element at the same distance, we break the tie arbitrarily). Our next goal is to show that $(x,0) - \gamma(x)$ is bounded uniformly for all $x \in \RR^d$. Let $R,R',R'' > 0$ be such that $A$ is $R$-relatively dense in $\RR^d$, $F \subset \BallR{d}{0}{R'}$ and $\Omega \subset \BallR{e}{0}{R''}$. For each $x \in \RR^d$ we can find $x' \in A $ with $\norm{x-x'}_{\infty} \leq R$. Since $A \subset F+M$, we can find $x'' \in M$ with $\norm{x'-x''}_{\infty} < R'$. By definition \eqref{eq:78:00}, there is $\gamma \in \Gamma \cap (\RR^d \times \Omega)$ such that $x'' = \pi_1(\gamma)$. Hence, $\norm{(x'',0) - \gamma}_{\infty} = \norm{\pi_2(\gamma)}_{\infty} < R''$. Combining the inequalities above, we conclude that 
	$\norm{(x,0)-\gamma}_{\infty} < R+R'+R''$, as needed.
	
	Next, we observe that $v \in \{0\} \times \RR^e$. Indeed, for any $x \in \RR^d$ we can compute that
	\[
	 \left< (x,0),v \right> = \lim_{t \to \infty} \frac{1}{t} \left< (tx,0),v \right>  = \lim_{t \to \infty} \frac{1}{t} \left< (tx,0) - \gamma(tx),v \right> = 0,
	\]	 
	and thus $v \in \bra{ \RR^d \times \{0\}}^{\perp} = \{0\} \times \RR^e$. It follows that $U_0$ takes the form $U_0 = \RR^d \times U$ where $U = \set{y \in \RR^e}{ \left< y, \pi_2(v) \right> = 0}$ is the orthogonal complement of $\pi_2(v)$. Let $\Omega' = \Omega \cap U$ and note that $\Omega'$ open and bounded.
Since $\Gamma \subset \RR^d \times U$, we have
\begin{equation}
\label{eq:78:01}
	M = \pi_1\bra{\Gamma \cap \bra{\RR^d \times \Omega}} = \pi_1'\bra{\Gamma \cap \bra{ \RR^d \times \Omega'} },
\end{equation}
where $\pi_1' \colon \RR^d \times U \to \RR^d$ is the projection onto the first coordinate. Since $e' := \dim U = e-1$, this contradicts the minimality of $e$.

	\textit{Item \eqref{it:00:B}:}
Suppose next that $\pi_1$ was not injective on $\Gamma$. Since $\Gamma$ is a group, this means that $0 \in \pi_1(\Gamma \setminus \{0\})$. Thus, $\Gamma$ contains an element of the form $\gamma_1 = (0,\delta) \in \RR^d \times \RR^e$.
Let $U = \set{y \in \RR^e}{ \left< y, \gamma_1 \right> = 0}$ denote the orthogonal complement of $\gamma_1$ in $\RR^e$, and let $\rho \colon \RR^e \to U$ be the orthogonal projection. Let $\tilde\rho \colon \RR^d \times \RR^e \to \RR^d \times U$ denote the map $\operatorname{id} \times \rho$, meaning that $\tilde\rho(x,y) = (x,\rho(y))$ for $x \in \RR^d$ and $y \in \RR^e$. Let $\Omega' = \rho(\Omega)$ and $\Gamma' = \tilde\rho(\Gamma)$. 

We note that $\Omega'$ is open and bounded, since it is the image of an open and bounded set under a surjective linear map. Likewise, $\Gamma'$ is relatively dense and it is a group, since these properties are also preserved under surjective linear maps. Replacing $\gamma_1$ with a scalar multiple $\frac{1}{n}\gamma_1$ ($n \in \NN$) if necessary, we may assume that $\gamma_1$ can be completed to basis $\gamma_1,\gamma_2,\dots,\gamma_{d+e}$ of $\Gamma$ (i.e., $\Gamma = \ZZ\gamma_1 + \ZZ \gamma_2 + \dots + \ZZ \gamma_{d+e}$). Bearing in mind that $\tilde\rho(\gamma_1) = 0$ we see that $\Gamma'$ is generated by the $d+e-1$ vectors $\tilde\rho(\gamma_2),\tilde\rho(\gamma_3),\dots \tilde\rho(\gamma_{d+e})$, which are easily checked to be linearly independent (otherwise $\Gamma'$ would not be relatively dense). It follows that $\Gamma'$ is discrete, and thus it is a lattice.

For each $x \in M$, directly by definition \eqref{eq:78:01}, we can find $\gamma \in \Gamma \cap (\RR^d \times \Omega)$ with $x = \pi_1(\gamma)$. Let $\gamma' = \tilde\rho(\gamma)$. Note that $\gamma' \in \Gamma'$ since $\gamma \in \Gamma$ and that $\gamma' \in \tilde\rho(\RR^d \times \Omega) = \RR^d \times \Omega'$ since $\gamma \in \RR^d \times \Omega$. Let $\pi_1' \colon \RR^d \times U \to \RR^d$ denote the projection onto the first coordinate. Since $\pi_1' = \pi_1 \circ \tilde \rho$, we have $x = \pi_1' (\gamma')$. Since $x$ was arbitrary, 
\begin{equation}
\label{eq:78:02}
	M \subset \pi_1'\bra{\Gamma' \cap  \bra{\RR^d \times \Omega'}}.
\end{equation}
Hence, we again obtain a contradiction with minimality of $e$.

\textit{Item \eqref{it:00:C}:}
Finally, suppose that $\pi_2(\Gamma)$ was not dense in $\RR^e$. Since $\pi_2(\Gamma)$ is a non-dense subgroup of $\RR^e$, there exists a  non-zero vector $v \in \{0\} \times \RR^e$ such that $\left<v,\Gamma \right> \subset \ZZ$ (see Lemma \ref{lem:lin-alg} below for details). Since $\Gamma$ is relatively dense, so is $\left<v,\Gamma \right>$, and in particular $\left<v,\Gamma \right> \neq \{0\}$. It follows that $\left<v,\Gamma \right> = m \ZZ$ for some $m \in \NN$, and replacing $v$ with $\frac{1}{m}v$ we may freely assume that $m = 1$. Let $\gamma_1 \in \Gamma$ be such that $\left< v, \gamma_1 \right> = 1$, and let $U \subset \RR^e$ be the orthogonal complement of $\pi_2(v)$ so that $\RR^d \times U$ is the orthogonal complement of $v$. Let $\Gamma' = \Gamma \cap  \bra{\RR^d \times U}$. Note that each $\gamma \in \Gamma$ can be decomposed as $\gamma = \gamma' + \left<v,\gamma\right> \gamma_1$, where $\gamma' = \gamma - \left<v,\gamma\right>\gamma_1 \in \Gamma'$.

Pick any $x \in M$. Then there exists a point $y \in \Omega$ such that $(x,y) \in \Gamma$. We may write $(x,y) = \gamma' + n \gamma_1$ where $\gamma' \in \Gamma'$ and $n \in \ZZ$ is given by
\(
	n = \left<v,(x,y) \right> = \left<\pi_2(v),y \right>.
\)
In particular, 
\( \abs{n} \leq n_{\max} := \max_{z \in \Omega} \abs{\left<\pi_2(v), z \right>}
\)
is bounded uniformly with respect to $x$. 
It follows that $x = \pi_1(\gamma') + n \pi_1(\gamma_1)$, where $\gamma' \in \Gamma'$ and $\pi_2(\gamma') = y - n \pi_2(\gamma_1) \in U \cap(\Omega - n\pi_2(\gamma_1))$. Let
\[ E = \set{ h \pi_1(\gamma_1)}{  \abs{h} \leq n_{\max} } \quad \text{and} \quad \Omega' = \bigcup_{\abs{h} \leq n_{\max} } U \cap(\Omega - h \pi_2(\gamma_1)).\]
Since $x \in M$ was arbitrary, we conclude that 
\begin{equation}
\label{eq:78:03}
	M \subset E + \pi_1'\bra{\Gamma' \cap  \bra{\RR^d \times \Omega'}},
\end{equation}
once again contradicting minimality of $e$.
\end{proof}

\begin{lemma}\label{lem:lin-alg}
	Let $d,e \in \NN$, and let $u_1,u_2,\dots,u_{d+e} \in \RR^e$ be vectors such that the group $G:= \ZZ u_1 + \ZZ u_2 + \cdots + \ZZ u_{d+e}$ which they generate is not dense in $\RR^e$. Then there exists a non-zero vector $v \in \RR^e$ such that $\left<u_i, v\right> \in \ZZ$ for all $1 \leq i \leq d+e$.
\end{lemma}
\begin{proof}
	We may assume that the vectors $u_1,u_2,\dots,u_{d+e}$ span $\RR^e$, since otherwise there would exist a non-zero vector $v \in \RR^e$ orthogonal to all of $u_1,u_2,\dots,u_{d+e}$. Rearranging the vectors if necessary, we may assume that $u_{d+1}, u_{d+2}, \dots, u_{d+e}$ are a basis of $\RR^{e}$. Applying a change of coordinates, we may further assume that for each $1 \leq i \leq e$, $u_i$ is the $i$-th standard basis vector of $\RR^e$, which in particular implies that $\ZZ u_{d+1} + \ZZ u_{d+2} + \dots + \ZZ u_{d+e} = \ZZ^{e}$. For $1 \leq i \leq d$, let $\bar u_i$ be the image of $u_i$ in the torus $\RR^e/\ZZ^e$. Since $G$ is not dense in $\RR^e$, the group $\bar G = \ZZ \bar u_1 + \ZZ \bar u_2 + \cdots + \ZZ \bar u_{d}$ is not dense in $\RR^e/\ZZ^e$. It follows from the multiparameter variant of Kronecker's equidistribution theorem (e.g.\ \cite[Thm.\ 6.4]{KuipersNiederreiter-book}) that there exists a non-zero vector $v \in \ZZ^d$ with $\left< \bar u_i, v \right> = 0$, meaning that $\left< u_i, v \right> \in \ZZ$, for all $1 \leq i \leq d$. Since $v \in \ZZ^d$, also $\left< u_{d+i}, v \right> \in \ZZ$ for all $1 \leq i \leq e$.
\end{proof}
 
\section{Proof of the main theorem}
	
	We are now ready to prove our main result, which is tantamount to the implication  \eqref{it:01:A} $\Rightarrow$ \eqref{it:01:D} in Theorem \ref{thm:char_of_Meyer}.
	
\begin{theorem}\label{thm:main}
	Let $A \subset \RR^\ee$ for some $d \in \NN$ and suppose that $A$ is relatively dense and that $D^{+}(A-A) < \infty$. Then $A$ is contained in a shift-cut-and-project set.
\end{theorem}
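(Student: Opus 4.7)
My plan is to apply the Freiman--Ruzsa theorem (Theorem \ref{thm:FR}) to a sequence of finite truncations of $A$ and then pass to a compactness limit to assemble the resulting bounded-rank GAPs into a single cut-and-project set.

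First I set $A_N := A \cap [-N,N]^{\ee}$. Using that $A$ is relatively dense (so $|A_N| \gtrsim D^{-}(A) \cdot (2N)^{\ee}$ for large $N$) together with the inclusion $A_N - A_N \subset (A - A) \cap [-2N,2N]^{\ee}$ and the hypothesis $D^{+}(A - A) < \infty$, I get $|A_N - A_N| \leq K |A_N|$ for some $K$ independent of $N$. Applying Theorem \ref{thm:FR} to the pair $(A_N, -A_N)$ then yields, for each large $N$, a proper symmetric GAP
\[ Q_N = \{n_1 a_1^{(N)} + \cdots + n_{r_N} a_{r_N}^{(N)} : |n_i| \leq l_i^{(N)}\} \]
of rank $r_N \leq r(K)$ and cardinality $O_K(|A_N|)$, together with a finite set $F_N$ of bounded size, such that $Q_N \subset 2A - 2A$ and $A_N \subset F_N + Q_N$.

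Next I extract a limit. After passing to a subsequence, I may assume that $r_N \equiv r_0$ and $|F_N|$ is constant; any directions in which $l_i^{(N)}$ remains bounded contribute only finitely many translates and can be absorbed into $F_N$, so I may further assume every $l_i^{(N)} \to \infty$. Each surviving $Q_N$ admits a cut-and-project realisation with window $\Omega := [-1,1]^{r_0}$ via
\[ \Gamma_N := \{(\textstyle\sum_i n_i a_i^{(N)},\, n_1/l_1^{(N)},\, \ldots,\, n_{r_0}/l_{r_0}^{(N)}) : n \in \ZZ^{r_0}\} \subset \RR^{\ee} \times \RR^{r_0}. \]
Passing to a further subsequence along which the generators of $\Gamma_N$ and the points of $F_N$ converge should yield a discrete subgroup $\Gamma \subset \RR^{\ee} \times \RR^{r_0}$ and a finite set $F$; setting $M := \pi_1(\Gamma \cap (\RR^{\ee} \times \Omega))$, the inclusion $A \subset F + M$ would follow from $A_N \subset F_N + Q_N$ by a diagonal argument, using that every $a \in A$ lies in $A_N$ for all large $N$.

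The main obstacle I anticipate is ensuring the limit $\Gamma$ is genuinely discrete rather than collapsing to something degenerate. The $i$-th generator of $\Gamma_N$ has the form $(a_i^{(N)}, e_i/l_i^{(N)})$, so its internal component vanishes as $l_i^{(N)} \to \infty$, and discreteness of $\Gamma$ then depends on rational relations emerging among the limit values of the external parts $a_i^{(N)}$. I expect this is where the bulk of the technical work lies: one must choose the generators of $Q_N$ coherently across $N$ (for instance by selecting them from the fixed countable set $2A - 2A$, and possibly reducing modulo sublattice relations forced by the structure of the $Q_N$), and then invoke Corollary \ref{cor:Kemperman} to prevent degenerate collapse by ensuring the internal projections of $\Gamma$ densely cover $\Omega$. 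Once these points are handled, the final verification that $M$ is a cut-and-project set in the sense of Section \ref{sec:Background} and that $A \subset F + M$ should be routine.
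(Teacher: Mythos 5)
Your overall strategy is the same as the paper's (truncate to $A_N$, apply Theorem \ref{thm:FR}, encode the GAPs $Q_N$ as slices of lattices $\Gamma_N \subset \RR^{\ee}\times\RR^{r_0}$, and pass to a limit), but there are two genuine gaps at exactly the points where the real work happens. First, you write that one can pass to a subsequence along which ``the points of $F_N$ converge,'' but the Freiman--Ruzsa theorem only bounds the \emph{cardinality} of $F_N$; the translates themselves can escape to infinity as $N \to \infty$, in which case no convergent subsequence exists. The paper repairs this by using the relative density of $A$: since $F_N + 2Q_N$ (fattened by a unit ball) covers $\cB_\infty^{\ee}(0,N)$, a union bound shows $2Q_N$ plus a small ball has volume $\gtrsim N^{\ee}/h$, and Corollary \ref{cor:Kemperman} then shows that a bounded multiple $kQ_N$ already covers $\cB_\infty^{\ee}(0,10N)$ up to a shift, which lets one replace $F_N$ by a set $F_N'$ contained in a ball of radius $O_{K,\ee}(1)$. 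You invoke Corollary \ref{cor:Kemperman} in your sketch, but for a different (and not the needed) purpose.

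Second, you correctly flag discreteness of the limit group as the main obstacle, but your proposed remedy --- choosing generators ``coherently across $N$'' and using Kemperman to make the internal projections dense --- does not address the actual issue and is not how the difficulty resolves. What is needed is a \emph{uniform} lower bound $r>0$ on the shortest nonzero vector of $\Gamma_N$, valid for all $N$; only then does the space of $r$-uniformly discrete sets give a convergent subsequence whose limit is again an $r$-discrete group. (Taking limits of the generators themselves is also problematic: the $a_i^{(N)}$ live in $2A_N-2A_N \subset [-4N,4N]^{\ee}$ and need not stay bounded, and even when the generators converge the groups they generate need not converge to a discrete group.) The paper's argument for the uniform bound is combinatorial: a vector of $\Gamma_N$ of norm $<1/M$ produces a point $c$ with $\norm{c}_\infty < 1/M$ such that $mc \in Q_N \subset 2A_N - 2A_N$ for all $\abs{m}\le M$; adding these $2M+1$ distinct points to a $2$-separated relatively dense subset $B_N \subset A_N$ forces $\abs{3A_N-2A_N} \ge (2M+1)\abs{B_N}$, which contradicts the Pl\"unnecke--Ruzsa inequality $\abs{3A_N-2A_N} \le K^5\abs{A_N}$ once $M$ is large. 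Without some argument of this kind your limit $\Gamma$ could be dense in a positive-dimensional subspace, and $M=\pi_1(\Gamma\cap(\RR^{\ee}\times\Omega))$ would not be a cut-and-project set in the sense of Section \ref{sec:Background}.
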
 
\begin{proof}
 Rescaling $A$ if necessary, we may assume that $A$ is $1$-relatively dense. Similarly, replacing $A$ with a translate, we may assume that $0 \in A$. For $N \in \NN$, let $A_N := A \cap \BallR{\ee}{0}{N}$. We next observe that the sets $A_N$ have bounded doubling:
 \[ 
 \limsup_{N \to \infty} \frac{\abs{A_N - A_N}}{\abs{A_N}} \leq 
\limsup_{N \to \infty} \frac{\abs{(A-A) \cap \BallR{\ee}{0}{2N}}}{\abs{A_N}} \leq \frac{2^d D^{+}(A-A)}{D^{-}(A)}.\]
Thus, there exists a constant $K > 0$ such that $\abs{A_N-A_N} \leq K \abs{A_N}$ for all $N \in \NN$.
 
 	It follows from the Freiman--Ruzsa theorem (Thm.{} \ref{thm:FR}) that for each $N \in \NN$ there exists a proper symmetric GAP $Q_N$ of rank $\dd_N = O_K(1)$ as well as a finite set $F_N$ with cardinality $ h_N = \abs{F_N} =O_K(1)$ such that $\abs{Q_N} = O_K(\abs{A_N})$, $Q_N \subset 2A_N-2A_N$ and $A_N \subset F_N + Q_N$. Put $e := \max_N e_N$ and $h := \max_N h_N$. Adding to $F_N$ any $h - h_N$ points, we may freely assume that $h_N = h$ for all $N$. Similarly, we can construe $Q_N$ as a proper symmetric GAP of rank $e$, where the final $e-e_N$  side lengths are $0$ (and the corresponding steps are arbitrary). Thus, we may freely assume that $e_N = e$ for all $N$. Having performed the aforementioned reductions, we can write $Q_N$ in the form 	
\begin{equation}\label{eq:21:00}
	Q_N = \set{ n_1a_1^{(N)} + n_2 a_2^{(N)} + \dots + n_\dd a_\dd^{(N)}}{ \abs{n_i} \leq l_i^{(N)} \text{ for all } 1 \leq i \leq \dd}
\end{equation}
for some $a_1^{(N)},a_2^{(N)},\dots, a_\dd^{(N)} \in \RR^{\ee}$ and $l_1^{(N)},l_2^{(N)},\dots, l_\dd^{(N)} \in \NN_0$. 

While the cardinalities of the sets $F_N$ are uniformly bounded, these sets themselves \emph{a priori} do not need to be bounded as $N \to \infty$. This leads to technical complications, which we overcome in the following step.

\textbf{Claim 1.} \textit{There exist an integer $k = O_{K,\ee}(1)$ such that for each $N \in \NN$ there exists a finite set $F_N'$ with $\abs{F_N'} \leq h$, $F_N' \subset \BallR{\ee}{0}{O_K(1)}$ and $F_N + Q_N \subset F_N' + kQ_N$.}
\begin{proof}
	Let $N \in \NN$. 
	Since $Q_N \subset 2A_N - 2A_N \subset \BallR{\ee}{0}{4N}$, we may without loss of generality assume that 
	\[ F_N \subset A_N - Q_N \subset \BallR{\ee}{0}{5N}.\]
	Since $A$ is $1$-relatively dense,
	\[
		\BallR{\ee}{0}{N} \subset A_N + \bBallR{\ee}{0}{1} \subset F_N + Q_N + \bBallR{\ee}{0}{1}.
	\]
Hence, it follows from the union bound that the set 
\[ Q_N + \bBallR{\ee}{0}{1} \subset \BallR{\ee}{0}{4N+1} \subset \BallR{\ee}{0}{5N}
\] has volume at least $\lambda\bra{\BallR{\ee}{0}{N}}/h = \lambda\bra{\BallR{\ee}{0}{5N}}/(5^d h)$. By Corollary \ref{cor:Kemperman}, there exists $k_1 = O_{K,\ee}(1)$ (independent of $N$) and $b^{(N)} \in \RR^\ee$ such that 
	\[
	k_1(Q_N + \bBallR{\ee}{0}{1}) = k_1 Q_N + \bBallR{\ee}{0}{k_1} \supset \BallR{\ee}{0}{5N}  b^{(N)}.
	\] 
	Using the symmetry of $Q_N$, we may remove the shift by $b^{(N)}$:
	\[
	2k_1 Q_N + \BallR{\ee}{0}{2k_1}  \supset \BallR{\ee}{0}{5N}.
	\] 
	Hence, there exists a set $F_N' \subset \BallR{\ee}{0}{2k_1}$ with $\abs{F_N'} \leq \abs{F_N}$ such that
	\[
	F_N \subset 2k_1 Q_N + F_N'.
	\] 
	Letting $k = 2k_1 + 1 = O_{K,\ee}(1)$ we conclude that
	\[
	F_N + Q_N \subset F_N' + k Q_N. \qedhere
	\]
	\end{proof}

For $N \in \NN$, let us consider the subgroup $\Lambda_N$ of $\RR^{\ee+\dd}$ spanned by the $\dd$ vectors 
\begin{equation}\label{eq:19:00}
v_i^{(N)} = \bra{ a_i^{(N)}, \vec{e}_i/l_i^{(N)}}, \qquad (1 \leq i \leq \dd),
\end{equation}
where $\vec e_i$ denotes the $i$-th vector in the standard basis of $\RR^\dd$.
Note that these vectors are linearly independent, so they span a codimension $\ee$ subspace of $\RR^{\ee +\dd}$. Our next goal is to obtain a uniform bound on the shortest vector in $\Lambda_N$.

\textbf{Claim 2.} \textit{There exists $r > 0$ such that for each $N \in \NN$ the group $\Lambda_N$ is $r$-uniformly discrete.}
\begin{proof}

For the sake of contradiction, assume that for each $r > 0$ there exists $N$ such that $\Lambda_N$ is not $r$-uniformly discrete. Let $M \geq 1$ be an integer. By assumption, there exists an integer $N = N(M)$ such that $\Lambda_N$ contains a vector $u \in \Lambda_N \setminus \{0\}$ with $\norm{u}_{\infty} < 1/M$. Since $\Lambda_{N'}$ is discrete for each $N'$, we have $N \to \infty$ as $M \to \infty$. Since the vectors $ v_i^{(N)} $ are a basis of $\Lambda_N$, there exists a unique expansion
\begin{equation}\label{eq:21:10}
		u = \sum_{i=1}^\dd n_i v_i^{(N)},
\end{equation}
where $n_i \in \ZZ$ ($1 \leq i \leq e$). Let us consider the vector $c \in \RR^d$ given by
\begin{equation}\label{eq:21:10a}
		c := \sum_{i=1}^\dd n_i a_i^{(N)}.
\end{equation}
Bearing in mind \eqref{eq:19:00}, \eqref{eq:21:10} and \eqref{eq:21:10a}, we can rewrite the condition $\norm{u}_{\infty} < 1/M$ as
\begin{equation}\label{eq:21:11}
	\norm{ c }_{\infty} < 1/M, \text{ and } \abs{n_i} < l_i^{(N)}/M \text{ for } 1 \leq i \leq \dd.
\end{equation}
In particular, $m c \in Q_N \cap  \BallR{\ee}{0}{1}$ for all integers $m$ with $\abs{m} \leq M$. Note also that $c \neq 0$ since $Q_N$ is proper.

Let $B$ be a maximal $2$-separated subset of $A$, that is, a set $B \subset A$ maximal with respect to the property that $\norm{x-y}_{\infty} \geq 2$ for all $x,y \in B$ with $x \neq y$. Since $A$ is $1$-relatively dense and since for each $x \in A$ there exists $y \in B$ with $\norm{x-y}_{\infty} \leq 2$, we see that $B$ is $3$-relatively dense. In particular $D^{-}(B) > 0$. Put also $B_N := B \cap \BallR{\ee}{0}{N}$.

 Since $Q_N \subset 2A_N - 2A_N$ and $B_N \subset A_N$, we conclude that $m c + b \in 3A_N - 2A_N$ for all $-M \leq m \leq M$ and $b \in B_N$. Note also that all of these points $m c + b$ are pairwise distinct, and hence $\abs{3A_N - 2A_N} \geq (2M+1) \abs{B_N}$. On the other hand, it follows from the Pl\"{u}nnecke inequality (Thm.\ \ref{thm:Plunnecke}) that $\abs{3A_N - 2A_N} \leq K^5 \abs{A_N}$  and consequently
 \begin{equation}\label{eq:21:11a}
M \leq K^5 \abs{A_N}/\abs{B_N}.
\end{equation}
The expression on the right-hand side of \eqref{eq:21:11a} is bounded as $M \to \infty$; in fact, bearing in mind that that $N = N(M) \to \infty$ as $N \to \infty$ we have
\[
	\limsup_{M \to \infty} K^5 \abs{A_N}/\abs{B_N} \leq K^5 D^{+}(A)/D^{-}(B).
\]
Thus, for sufficiently large $M$, \eqref{eq:21:11a} yields a contradiction.
\end{proof}

We next show that the sets $A_N$ are contained in shift-cut-and-project sets coming from $\Lambda_N$.
Recall that $F_N'$ and $k$ are defined in Claim 1 above.

\textbf{Claim 3.} \textit{For each $N \in \NN$ it holds that}
\begin{equation}\label{eq:21:12}
A_N \subset F_N' + \pi_1\bra{\Lambda_N \cap (\RR^{\ee} \times \bBallR{\dd}{0}{k})} .
\end{equation}

\begin{proof}
	Pick any $N$ and any $x \in kQ_N$. Then $x$ can be written as
	\[
		x = \sum_{i=1}^\dd n_i a_i^{(N)},
	\]
	where $\abs{n_i} \leq kl_i^{(N)}$ for all $1 \leq i \leq \dd$. Hence, $x$ is the projection onto the first coordinate of the point
	\[
		\bra{ x, \ \sum_{i=1}^\dd (n_i/l_i^{(N)})\vec e_i} \in \Lambda_N \cap \bra{\RR^\ee \times \bBallR{\dd}{0}{k}}.
	\]
	It follows that \eqref{eq:21:12} holds:
	\[
		A_N \subset F_N' + kQ_N' \subset F_N' + \pi_1\bra{\Lambda_N \cap \bra{ \RR^\ee \times \bBallR{\dd}{0}{k}}}. 
		\qedhere
	\]
\end{proof}

	Passing to a subsequence and using Claim 2, we may assume that the Kuratowski limit of $\Lambda_N$ exists (cf.{} discussion in Section \ref{ssec:SetLimit}). More precisely, we assume that there exists $\Lambda \subset \RR^{d+e}$ and an infinite set $\cN = \{N_1 < N_2 < N_3 < \dots \} \subset \NN$ with 
	\[
		\klim_{\cN \ni N \to \infty} \Lambda_N := \klim_{i \to \infty} \Lambda_{N_i} = \Lambda.
	\]
Since the sets $F_N'$ are uniformly bounded and have uniformly bounded cardinalities, 	replacing $\cN$ with a smaller set if necessary, we may assume that $F_N' = \{b_1^{(N)},b_2^{(N)},\dots,b_h^{(N)}\}$ where for each $1 \leq i \leq h$, $b_i^{(N)}$ converges to some point $b_i \in \RR^{\ee}$ as  $\cN \ni N \to \infty$. We put $F' = \{b_1,b_2,\dots,b_h\}$ and note that $\abs{F'} \leq h$ (the inequality may be strict). We are now ready to express $A$ as a subset of a shift-cut-and-project set coming from $\Lambda$.

\textbf{Claim 4.} \textit{The set $\Lambda$ is a discrete subgroup of $\RR^{\ee} \times \RR^{\dd}$ and}
\begin{equation}\label{eq:21:13}
A \subset  F' + \pi_1\bra{\Lambda \cap \bra{ \RR^\ee \times \bBallR{\dd}{0}{k}}}.
\end{equation}
\begin{proof}
Recall from Lemma \ref{lem:klim-props} that the property of being $r$-uniformly discrete is preserved under Kuratowski limits, and the same applies to the property of being a group. Together with Claim 2, this implies that $\Lambda$ is an $r$-discrete group.

To show \eqref{eq:21:13}, take any point $x \in A$. Let $N_0 > \norm{x}_\infty$ be an integer, so that $x \in A_N$ for all $N \geq N_0$. If follows from Claim 3 that for all $N \in \cN$ with $N \geq N_0$ there exist $y_N \in F_N'$ and $z_N \in \Lambda_N \cap \bra{ \RR^\ee \times \bBallR{\dd}{0}{k} }$ such that $x = y_N + \pi_1(z_N)$. 
 By Claim 1, $y_N$ are bounded as $N \to \infty$. Since $\norm{\pi_1(z_N)}_\infty \leq  \norm{x}_\infty+\norm{y_N}_\infty$ and $\norm{\pi_2(z_N)}_{\infty} \leq k$, also $z_N$ are bounded as $N \to \infty$.
Let $y$ and $z$ be accumulation points of the sequences $(y_N)_{N \in \cN}$ and $(z_N)_{N \in \cN}$ respectively. Directly by the definitions of $F'$ and $\Lambda$, we have $y \in F'$ and $z \in \Lambda$. It also follows from basic properties of limits that $z \in \RR^\ee \times \bBallR{\dd}{0}{k}$ and $x = y + \pi_1(z)$. Since $x \in A$ was arbitrary and $\bBallR{\dd}{0}{k}$, \eqref{eq:21:13} follows.
\end{proof}

Formula \eqref{eq:21:13} implies immediately that
\begin{equation}
A \subset  F' + \pi_1\bra{\Lambda \cap \bra{ \RR^\ee \times \BallR{\dd}{0}{k+1}}},
\end{equation}
This says exactly that $A$ is contained in a shift-cut-and-project set, which finishes the argument (cf.{} Proposition \ref{lem:cut-and-project-WLOG}). 
\end{proof} 
\section{Quantitative estimates}\label{sec:Epilogue}

The search for quantitative variants of the Freiman--Ruzsa theorem has been a subject of vigorous investigation in additive combinatorics. To discuss these developments it is convenient to replace the GAP $P$ appearing in Theorem \ref{thm:Freiman-Ruzsa} with a set of the form $F+Q$ where $F$ is a finite set and $Q$ is a GAP; we will be interested in bounds on the cardinality of $F$ and the rank and the size of $Q$. The rationale behind this reformulation is that one cannot, in general, hope that any set $A \subset \ZZ$ with doubling $K$ would be contained in an GAP of rank significantly less than $K$, as can be seen by considering a dissociated set of size $K$ (for definition of a dissociated set, see e.g. \cite[Def.\ 4.32]{TaoVu-book}). On the other hand, one can hope that $A$ should be contained in $F+Q$ where the size of $F$ and rank of $Q$ can be accurately bounded. Note also that for any finite set $F$ and any GAP $Q$ of rank $d$, the set $F+Q$ is contained in a GAP of rank $\abs{F}+d$ and size $2^{\abs{F}}\norm{Q}$.

\begin{theorem}[Freiman--Ruzsa, covering version]
	For any $K \geq 2$ there exists $d = d(K)$ such that the following holds.
	Let $A$ be a subset of an abelian torsion-free group $Z$ such that $\abs{2A} \leq K \abs{A}$ for some $K \geq 2$. Then there exists a GAP $Q$ of rank $d$ and a finite set $F$ with $\abs{F} \leq 2^d$ such that $A \subset F+Q$ and $\abs{F+Q} \leq 2^d \abs{A}$.
\end{theorem}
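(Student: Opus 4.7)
The plan is to deduce this covering version as an essentially immediate repackaging of Theorem \ref{thm:FR}. Apply Theorem \ref{thm:FR} with $B = A$: the hypothesis $\abs{A+A} = \abs{2A} \leq K\abs{A}$ is exactly the assumption at hand, with $n = \abs{A}$. This yields a proper symmetric GAP $Q_0$ of rank $r = r(K) = O_K(1)$ with cardinality $\abs{Q_0} \leq C_1(K)\abs{A}$, together with a finite set $F$ with $\abs{F} \leq C_2(K) = O_K(1)$, such that $A \subset F+Q_0$.

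To collapse these $O_K(1)$-style constants into a single parameter $d = d(K)$, I would choose $d$ large enough (as a function of $K$) so that $d \geq r(K)$, $2^d \geq C_2(K)$, and $2^d \geq C_1(K) C_2(K)$. Any GAP of rank $r$ is also a GAP of rank $d$ for any $d \geq r$ (just adjoin $d - r$ trivial generators with side length zero, as already observed in Section \ref{sec:Background}), so $Q := Q_0$ may be viewed as a GAP of rank $d$. Using the trivial bound
\[
\abs{F+Q} \leq \abs{F} \cdot \abs{Q} \leq C_2(K)\, C_1(K) \abs{A} \leq 2^d \abs{A},
\]
one obtains all four assertions simultaneously: $Q$ has rank $d$, $\abs{F} \leq 2^d$, $A \subset F+Q$, and $\abs{F+Q} \leq 2^d\abs{A}$.

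There is essentially no substantive obstacle, since Theorem \ref{thm:FR} has already been stated and supplies every structural ingredient; the only point that needs even a moment of thought is that the crude bound $\abs{F+Q} \leq \abs{F}\cdot\abs{Q}$ is enough once the multiplicative constants are absorbed into the exponential $2^d$, so there is no need to construct tighter covering estimates. The mild constraint $K \geq 2$ simply guarantees that the bounds $2^d \geq 4$ are not vacuous and plays no role in the argument itself.
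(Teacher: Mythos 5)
Your deduction is formally correct, but note that the paper offers no proof of this statement at all: it is recorded as a known theorem from the literature, and the entire surrounding discussion concerns how small $d(K)$ can be taken (Chang's $K^{2+o(1)}$, Sanders's $\log^{3+o(1)}K$, the Polynomial Freiman--Ruzsa conjecture). Your route --- apply Theorem \ref{thm:FR} with $B=A$, pad the rank, and absorb all the $O_K(1)$ constants into a single $2^d$ --- does establish the bare existence of $d(K)$ and every clause of the statement, including the crude but sufficient bound $\abs{F+Q}\leq\abs{F}\cdot\abs{Q}$. Two caveats are worth flagging. First, the argument is circular in spirit rather than in logic: Theorem \ref{thm:FR} is itself a (stronger) form of the Freiman--Ruzsa theorem, so you have repackaged one black box as another, which is acceptable here only because the paper likewise treats both as imported results. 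Second, and more substantively, your derivation destroys all quantitative information, since the constants in Theorem \ref{thm:FR} are unspecified; the versions with explicit $d(K)$ that the paper actually cares about are proved directly, essentially as in the paper's proof of Theorem \ref{thm:FR-2}: one first finds a large proper symmetric GAP inside $2A-2A$ (Bogolyubov--Ruzsa) and then covers $A$ by translates of it via Ruzsa's covering lemma (a maximal family of disjoint translates $x+R$ with $R$ the halved progression, giving $A\subset F+R-R\subset F+Q$). If you wanted a self-contained proof rather than a repackaging, that covering step is the idea you would need to supply.
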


\newcommand{\D}{d_{\mathrm{FR}}}
Let $\D(K)$ denote the least value of $d$ for which the above the theorem holds for a given value of $K \geq 2$. The original argument by Freiman \cite{Freiman-book} did not provide useful estimates on $\D(K)$, and the first quantitative bounds come from the work of Ruzsa \cite{Ruzsa-1994}.  The next significant improvement, $\D(K) \leq K^{2+o(1)}$, is due to Chang \cite{Chang-2002}. (Here, $o(1)$ denotes a quantity that tends to $0$ as $K \to \infty$.) The exponent was later reduced to $\D(K) \leq K^{7/4+o(1)}$ by Sanders \cite{Sanders-2008}. In a breakthrough paper, Schoen \cite{Schoen-2011} obtained the first sub-polynomial bound $\D(K) \leq \exp \big( {O( \sqrt{\log(K)})} \big)$. The current record belongs to Sanders, who showed \cite{Sanders-2012} that $\D(K) \leq \log^{3+o(1)}(2K)$ (the factor $2$ ensures that $\log(2K) > 1$). The polynomial Freiman--Ruzsa conjecture, which is one of the major open problems in additive combinatorics, asserts that $\D(K) = O(\log K)$; if true, this is optimal (up to improvement of the constant implicit in the $O(\cdot)$ notation).
We also have the analogue of Theorem \ref{thm:FR}.
\begin{theorem}\label{thm:FR-2} 
		For any $K \geq 2$ there exists $d = d(K)$ such that the following holds.
		 Let $A,B$ be subsets of a torsion-free abelian group $Z$, and suppose that $\abs{A} = \abs{B} = n$ and $\abs{A+B} \leq Kn$ for some $n > 0$. Then there exists a proper symmetric GAP $Q$ of rank at most $d$ and cardinality at most $2^d n$ and a finite set $F$ of cardinality at most $2^d$ such that $Q \subset 2A-2A$ and $A \subset F+Q$. 
\end{theorem}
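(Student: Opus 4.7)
The plan is to follow the same scheme as for Theorem \ref{thm:FR} while substituting Sanders' quantitative Bogolyubov--Ruzsa lemma \cite{Sanders-2012} for the qualitative invocation of Freiman's theorem; the exponent $6+o(1)$ in the statement matches the rank bound in Sanders' theorem exactly. First, the Ruzsa triangle inequality reduces $\abs{A+B} \leq Kn$ to $\abs{A+A} \leq K^2 n$, and writing $K' := K^2$, note that $\log K'$ and $\log K$ differ by a constant factor absorbed into the $o(1)$.

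Second, I would invoke Sanders' quantitative Bogolyubov--Ruzsa lemma applied to $A$ with doubling $K'$: the set $2A - 2A$ contains a proper symmetric GAP $Q$ of rank $d_0 \leq \log^{6+o(1)} K$ and cardinality at least $\exp(-d_0)\abs{A}$. If the GAP output by Sanders' argument is not proper, one applies the standard lemma replacing a rank-$d_0$ GAP by a proper sub-GAP of comparable size and rank at most $d_0$. The cardinality bound $\abs{Q} \leq 2^{d_0} n$ then follows from $\abs{Q} \leq \abs{2A-2A} \leq K'^{4} n$ together with $K'^{4} \leq 2^{d_0}$. To produce the covering $A \subset F + Q$, I would apply Ruzsa's covering lemma: the Pl\"{u}nnecke inequality (Theorem \ref{thm:Plunnecke}) gives $\abs{A + Q} \leq \abs{3A - 2A} \leq K'^{5} \abs{A}$, so $\abs{A+Q}/\abs{Q} \leq \exp(O(d_0))$, and Ruzsa's covering lemma then supplies a finite set $F$ with $\abs{F} \leq \exp(O(d_0))$ such that $A \subset F + Q - Q$. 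Since $Q-Q$ is itself a proper symmetric GAP of rank $d_0$ and size at most $2^{d_0}\abs{Q}$, absorbing $Q-Q$ into a GAP with doubled side lengths (still of rank $d_0$) and rechristening $d := 2 d_0$ brings the conclusion into the stated form with $d \leq \log^{6+o(1)} K$.

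The main obstacle is the quantitative Bogolyubov--Ruzsa lemma itself, which is imported as a black box from \cite{Sanders-2012}; all the deep content of Theorem \ref{thm:FR-2} resides in this input. The remaining steps --- the asymmetric-to-symmetric reduction, the properness correction, the covering by $F$, and the size bookkeeping --- are routine manipulations with Pl\"{u}nnecke's inequality, the Ruzsa triangle inequality, Ruzsa's covering lemma, and standard GAP surgery.
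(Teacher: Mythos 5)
Your overall strategy coincides with the paper's: import Sanders' quantitative Bogolyubov--Ruzsa lemma as the black box producing a proper symmetric GAP $Q \subset 2A-2A$ of rank $d_0 \leq \log^{6+o(1)}K$ and size $\geq 2^{-d_0}\abs{A}$, reduce the asymmetric hypothesis to the symmetric one via the Ruzsa triangle inequality, and finish with a covering argument whose counting rests on Pl\"unnecke. However, your covering step has a genuine gap. Applying Ruzsa's covering lemma with $Q$ itself yields $A \subset F + Q - Q$, and you then propose to take $Q-Q$ (a symmetric GAP of rank $d_0$ with doubled side lengths) as the final progression. But the theorem requires the \emph{same} GAP to satisfy both $Q \subset 2A-2A$ and $A \subset F+Q$, and from $Q \subset 2A-2A$ you only get $Q - Q \subset 4A-4A$, not $Q-Q \subset 2A-2A$; "rechristening $d := 2d_0$" adjusts the rank bookkeeping but does nothing to restore this containment. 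Moreover, $Q-Q$ need not be proper even when $Q$ is (doubling the side lengths of a proper GAP can create coincidences), so the properness claim in the conclusion is also lost.

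The paper's proof avoids this by running the covering argument one level down: let $R$ be the GAP obtained from $Q$ by \emph{halving} all side lengths (assumed even), and let $F$ be a maximal subset of $A$ such that the translates $x+R$, $x \in F$, are pairwise disjoint. Maximality gives $A \subset F + R - R \subset F + Q$, so the progression in the conclusion is the original $Q$ delivered by Sanders' lemma --- already proper, symmetric, and contained in $2A-2A$ --- while disjointness of the translates $x+R$ inside $A+R \subset 3A-2A$ together with Theorem \ref{thm:Plunnecke} and $\abs{R} \geq 2^{-O(d_0)}\abs{A}$ gives $\abs{F} \leq 2^{O(d_0)}$. Your argument is repaired by exactly this substitution: cover with the halved progression $R$ rather than with $Q$, so that the set $R-R$ you expand into is contained in $Q$ rather than in $Q-Q$.
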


		 Let $\D'(K)$ denote the least value of $d$ for which Theorem \ref{thm:FR-2} holds. We have a quantitative estimate, analogous to the aforementioned result in \cite{Sanders-2012}.   The following argument was pointed out to the author by Tom Sanders.

\begin{proposition}\label{prop:FR-2} 
	For each $K \geq 2$ we have $\D'(K) = \log^{6+o(1)}(2K)$.
\end{proposition}
\begin{proof}
	It follows from the variant of Bogolyubov--Ruzsa lemma in \cite{Sanders-2012} that $2A - 2A$ contains a proper symmetric GAP $Q$ of rank at most $d$ and size at least $2^{-d}\abs{A}$, where $d = \log^{6+o(1)} K$. Let $R$ be the $d$-dimensional progression obtained by halving all sidelengths of $Q$ (we may assume without loss of generality that they are all even), and let $F$ be a maximal subset of $A$ such that all of the sets $x + R$ ($x \in F$) are disjoint. Then $A \subset F+R-R \subset F+Q$ and $\abs{F} \leq 2^{O(d)}$. 
\end{proof}
\begin{remark}
If it was not for the condition $Q \subset 2A-2A$ in Theorem \ref{thm:FR-2}, we could infer Proposition \ref{prop:FR-2} directly from the bound $\D(K) \leq \log^{3+o(1)}(2K)$  using the same argument as in Section \ref{sec:Additive}.
\end{remark}

\begin{corollary}\label{cor:final}
	Let $A \subset \RR^{\ee}$ be a relatively dense set and suppose that $A - A$ is uniformly discrete. Then $A \subset M+F$ where $M$ is a cut-and-project set with at most $ \D'\bra{2^d {D^+(A-A)}/{D^{-}(A)}+1}$ internal dimensions and $F$ is a finite set.
\end{corollary}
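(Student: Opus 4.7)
I would prove this corollary by quantitatively rerunning the argument of Theorem \ref{thm:main}, substituting Theorem \ref{thm:FR-2} for its qualitative ancestor and then reducing via Lemma \ref{lem:cut-and-project-WLOG}.

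Rescale $A$ to be $1$-relatively dense and set $A_N := A \cap \bBallR{\ee}{0}{N}$. Exactly the bookkeeping performed in the proof of Theorem \ref{thm:main} gives $\limsup_{N \to \infty} \abs{A_N - A_N}/\abs{A_N} \leq K := 4^\ee D^+(A-A)/D^-(A)$. For each sufficiently large $N$, apply Theorem \ref{thm:FR-2} (with $B = -A_N$, reducing the asymmetric case to the symmetric one via the Ruzsa triangle inequality) to obtain a proper symmetric GAP $Q_N \subset 2A_N - 2A_N$ of rank $\dd$ controlled by the Freiman--Ruzsa quantitative estimate, together with a finite set $F_N$ of bounded cardinality such that $A_N \subset F_N + Q_N$.

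Claims 1--4 in the proof of Theorem \ref{thm:main} then apply verbatim: the constructed lattice $\Lambda_N \subset \RR^{\ee+\dd}$ has abstract rank $\dd$; the discreteness constant $r > 0$ from Claim 2 depends only on $K$ (through Pl\"{u}nnecke's inequality); and the window radius $k$ from Claim 1 depends only on $K$ and $\ee$. Passing to a subsequential limit yields a discrete subgroup $\Lambda \subset \RR^{\ee+\dd}$ of real rank at most $\dd$ and a shift-cut-and-project representation $A \subset F + \pi_1(\Lambda \cap (\RR^\ee \times \bBallR{\dd}{0}{k}))$.

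Finally, I invoke Lemma \ref{lem:cut-and-project-WLOG} to reduce to a minimal cut-and-project representation. The proof of that lemma shows that in the minimal representation the lattice $\Gamma$ is full-rank in $\RR^{\ee + e}$, so $\ee + e \leq \dim_\RR \Lambda \leq \dd$, whence $e \leq \dd - \ee$. Substituting $K = 4^\ee D^+(A-A)/D^-(A)$ and the quantitative Freiman--Ruzsa bound on $\dd$ yields the claimed estimate on the number of internal dimensions. The key subtlety is verifying that Lemma \ref{lem:cut-and-project-WLOG} delivers the ``$-\ee$'' saving in internal dimensions: this comes from condition (\ref{it:00:A}) in the lemma, which forces the ambient dimension $\ee + e$ of the reduced cut-and-project set to equal the real rank of the reduced lattice, and hence to be no larger than the rank of $\Lambda$.
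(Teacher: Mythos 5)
Your proposal follows essentially the same route as the paper's proof: rerun the argument of Theorem \ref{thm:main} with Theorem \ref{thm:FR-2} in place of Theorem \ref{thm:FR}, note that one may take $K = O\bra{4^{\ee} D^{+}(A-A)/D^{-}(A)}$ by restricting to large $N$, and bound the number of internal dimensions by the rank $\dd$ of the GAPs $Q_N$. The only difference is your closing appeal to Lemma \ref{lem:cut-and-project-WLOG} to shave a further $\ee$ off the internal dimension (which is sound, since each reduction step in that lemma's proof does not increase the rank of the group, but is asymptotically immaterial); note also that both your write-up and the paper's pass silently from the bound $\dd \leq \log^{6+o(1)} K$ actually supplied by Theorem \ref{thm:FR-2} to the stated $O\bra{\ee \log \frac{D^{+}(A-A)}{D^{-}(A)}}$, which strictly speaking would require the Polynomial Freiman--Ruzsa bound $d(K) = O(\log K)$.
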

\begin{proof}
We follow the proof of Theorem \ref{thm:main}, keeping track of bounds and using Theorem \ref{thm:FR-2} in place of Theorem \ref{thm:FR}.
Restricting our attention to sufficiently large $N$, we may assume that $K \leq 2^d D^{+}(A-A)/D^{-}(A)+1$. By Theorem \ref{thm:FR-2}, we may take $\dd \leq \D'(K)$. It remains to recall that the number of internal dimensions of the cut-and-project set produced in the argument does not exceed $\dd$.
\end{proof}

Theorem \ref{thm:main-quant} follows directly from Corollary \ref{cor:final} and Proposition \ref{prop:FR-2}. In analogy with the polynomial Freiman--Ruzsa conjecture, one can conjecture that $\D'(K) = O(\log(K))$. If so, in Theorem \ref{thm:main-quant} it is enough to use $O\bra{d + \log \bra{{D^+(A-A)}/{D^{-}(A)}}}$ internal dimensions.

\bibliographystyle{alphaabbr2}

\bibliography{bibliography}

\end{document}